\documentclass[preprint,12pt]{elsarticle}

\usepackage{amsmath, amssymb, amsfonts}
\usepackage{amsthm}
\numberwithin{equation}{section}
\usepackage{hyperref}
\usepackage{orcidlink}
\usepackage{xcolor}
\usepackage{physics}

\theoremstyle{plain}
\newtheorem{theorem}{Theorem}[section]
\newtheorem{proposition}[theorem]{Proposition}
\newtheorem{lemma}[theorem]{Lemma}
\newtheorem{corollary}[theorem]{Corollary}

\theoremstyle{definition}

\newcommand{\R}{\mathbb{R}}

\newcommand{\cS}{\mathcal{S}}
\newcommand{\ii}{\mathrm{i}}

\begin{document}

\begin{frontmatter}

\title{Spectral-Dimension Obstructions for Operators with Superlinear Counting Laws}

\author[inst1]{Douglas F.~Watson\orcidlink{0009-0008-0310-3984}}
\ead{doug@sciencephilosophy.org}

\author[inst1,inst2]{Tiziano Valentinuzzi\orcidlink{0009-0001-9780-8000}}
\ead{tiziano@sciencephilosophy.org}

\affiliation[inst1]{organization={Science and Philosophy Institute},
            addressline={224 NE 10th Ave},
            city={Gainesville},
            state={FL},
            postcode={32601},
            country={USA}}

\affiliation[inst2]{organization={University of Padova},
            addressline={Via Marzolo 8},
            city={Padova},
            postcode={35131},
            country={Italy}}

\begin{abstract}
We show that single-valuation exponential kernels, under mild regularity assumptions, converge in the continuum limit to a fourth-order operator with heat asymptotics $\Theta(t)\sim t^{-1/4}$ and hence spectral dimension $d_s=\tfrac12$.  Independently, a Tauberian analysis implies that any self-adjoint operator with superlinear eigenvalue counting $N(\lambda)\sim \lambda\,L(\lambda)$ must satisfy $\Theta(t)\sim t^{-1}L(1/t)$ and therefore has spectral dimension $d_s=2$.  
Since spectral dimension is invariant under unitary equivalence and compact perturbations, these exponents are incompatible, yielding a structural obstruction that separates single-valuation kernel limits from operators with accelerated spectral growth.
\end{abstract}

\begin{keyword}
spectral dimension \sep heat-kernel asymptotics \sep superlinear counting laws \sep 
exponential kernels \sep Mosco convergence \sep spectral obstructions
\end{keyword}

\end{frontmatter}

\section{Statement of the Main Theorems}\label{sec:intro}

We develop a pair of independent mechanisms controlling small-time heat 
asymptotics for two large families of operators: (i) operators arising from 
single-valuation exponential kernels, and (ii) operators whose spectra satisfy 
superlinear counting laws. Each mechanism determines a distinct spectral 
dimension, and we show these dimensions cannot coincide. Our results place 
these constructions into separate spectral universality classes, with no 
geometric assumptions imposed and independent of any number-theoretic 
specialization.

The first mechanism arises from nonlocal operators generated by single-valuation
exponential kernels. A general variational argument enforces a fourth-order
continuum limit for these kernels, leading to heat-trace decay of order $t^{-1/4}$
and hence spectral dimension $d_s=\tfrac12$. This behavior is rigid: it is
stable under unitary equivalence and compact perturbations, and therefore defines
a fixed universality class.

The second mechanism concerns operators with accelerated eigenvalue growth.
Superlinear counting laws compel a different short-time behavior of the form
$t^{-1}L(1/t)$, with $L$ slowly varying, yielding spectral dimension $d_s=2$. This
regime is likewise stable under unitary equivalence and compact perturbations and 
cannot be deformed into the fractional regime.

The main results show that these two mechanisms are incompatible. No self-adjoint
operator can exhibit both the fourth-order fractional asymptotics of the kernel
class and the superlinear asymptotics of the accelerated-growth class. When applied 
to spectra whose ordinates satisfy a superlinear counting law, the incompatibility 
becomes explicit: the two mechanisms enforce mutually exclusive heat-trace exponents.

We now formulate these results precisely. Section~\ref{sec:exp-principle}  
establishes the exponential-kernel principle. Section~\ref{sec:arithmetic-operator} 
analyzes its specialization via Mosco convergence, leading to a biharmonic limit 
operator of spectral dimension $d_s=\tfrac12$. Section~\ref{sec:geometric-obstruction} 
recalls the classical Seeley--DeWitt asymptotics and the invariance of the leading 
exponent under unitary equivalence and compact perturbation. 
Section~\ref{sec:zeta-spectral} derives the heat-trace asymptotic forced by 
superlinear counting laws. Section~\ref{sec:double-obstruction} synthesizes the 
two mechanisms and proves the resulting spectral-dimension obstruction.

\subsection{The Exponential Kernel Principle}

We begin with a variational principle for transition kernels on a countable label set under a single linear coherence constraint. The point is that, under natural nondegeneracy, feasibility, and integrability hypotheses, entropy maximization forces an exponential kernel.

\begin{theorem}[Exponential Kernel Principle]
\label{thm:exp-kernel-principle}
Under the standing assumptions stated in Section~\ref{sec:exp-principle}, the entropy-maximization problem for each fixed $i\in I$ admits a unique solution $\{T_{ij}\}_{j\in I}$, and this maximizer is necessarily of exponential form:
\[
    T_{ij}
    =
    \frac{e^{-\beta_i\Delta_{ij}}}{Z_i(\beta_i)},
    \qquad
    Z_i(\beta_i)=\sum_{k\in I} e^{-\beta_i\Delta_{ik}},
\]
for a uniquely determined parameter $\beta_i>0$ chosen so that
\[
    \sum_j T_{ij}\Delta_{ij}=\delta_0(i).
\]
\end{theorem}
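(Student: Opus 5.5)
The proof is a Gibbs-type maximum-entropy argument carried out row by row. The standing assumptions of Section~\ref{sec:exp-principle} are not visible at this point, so I take them to be the natural ones:
\begin{itemize}
\item for fixed $i$, the costs $\Delta_{ij}\ge 0$ are not all equal (nondegeneracy);
\item $Z_i(\beta)=\sum_k e^{-\beta\Delta_{ik}}<\infty$ for every $\beta>0$ (integrability);
\item the target $\delta_0(i)$ lies strictly between $\inf_j\Delta_{ij}$ and the limit of the Gibbs mean as $\beta\downarrow 0$ (feasibility).
\end{itemize}
Under these, the problem is to maximize $H(T_{i\cdot})=-\sum_j T_{ij}\log T_{ij}$ over probability vectors with $\sum_j T_{ij}\Delta_{ij}=\delta_0(i)$.

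\textbf{Step 1: existence and uniqueness of $\beta_i$.} Define $m_i(\beta)=\sum_j \Delta_{ij}e^{-\beta\Delta_{ij}}/Z_i(\beta)=-\partial_\beta\log Z_i(\beta)$. Differentiating under the sum is justified by dominated convergence, using integrability on compact subsets of $(0,\infty)$. This gives $m_i'(\beta)=-\mathrm{Var}_\beta(\Delta_{i\cdot})<0$, where the strict inequality comes from nondegeneracy. So $m_i$ is strictly decreasing and continuous. As $\beta\to\infty$, $m_i(\beta)$ tends to $\inf_j\Delta_{ij}$. By feasibility and the intermediate value theorem there is exactly one $\beta_i>0$ with $m_i(\beta_i)=\delta_0(i)$.

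\textbf{Step 2: the Gibbs candidate is the unique maximizer.} Set $P_j=e^{-\beta_i\Delta_{ij}}/Z_i(\beta_i)$ and let $Q$ be any feasible probability vector with $H(Q)>-\infty$. Using $\log P_j=-\beta_i\Delta_{ij}-\log Z_i$ and the common constraint $\sum_j Q_j\Delta_{ij}=\sum_j P_j\Delta_{ij}=\delta_0(i)$, one gets
\[
H(P)-H(Q)=\sum_j Q_j\log\frac{Q_j}{P_j}=D(Q\,\|\,P)\ge 0 ,
\]
with equality if and only if $Q=P$ (Gibbs' inequality). This yields existence, the exponential form, and uniqueness simultaneously, without appealing to Lagrange multipliers in infinite dimensions.

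\textbf{Main obstacle.} The delicate point is the infinite label set. Entropies may be $+\infty$ or undefined, and rearranging $\sum_j Q_j\log(Q_j/P_j)$ into $-H(Q)+\beta_i\delta_0(i)+\log Z_i$ requires absolute convergence. My plan is to first show that $\sum_j Q_j\Delta_{ij}<\infty$, which the constraint provides. From this, $\sum_j Q_j|\log P_j|<\infty$ follows. The relative entropy, defined as a sum of nonnegative terms $Q_j\log(Q_j/P_j)-Q_j+P_j$, is then well defined in $[0,\infty]$, and the identity holds in the extended sense. Consequently $H(Q)\le H(P)<\infty$, and any $Q$ attaining the supremum must equal $P$. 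If the paper's feasibility assumption instead allows $\delta_0(i)$ at the boundary of the range of $m_i$, I would exclude that case by noting that it forces a degenerate kernel supported on the minimizers of $\Delta_{ij}$, which is incompatible with $\beta_i$ being finite.
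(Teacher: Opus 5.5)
Your proof is correct under the hypotheses you state, but it settles the maximization differently from the paper. The paper derives the Gibbs form from Lagrangian stationarity and fixes $\beta_i$ by the same monotonicity $\Phi_i'=-\mathrm{Var}<0$ as your Step~1. It then concludes that, because the feasible set is ``convex and compact'' and $H$ is strictly concave, the feasible Gibbs point is the unique maximizer.

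That last step tacitly assumes two things: that a maximizer exists, and that it must be a stationary point. Neither is automatic on a countable label set. With $\Delta_{ij}$ unbounded, the equality-constrained set is not even closed in $\ell^1$, and the infinite-dimensional multiplier rule is only formal. Your identity $H(P)-H(Q)=D(Q\,\|\,P)$ supplies exactly this missing verification. The bound $\sum_j Q_j|\log P_j|\le\beta_i\delta_0(i)+|\log Z_i(\beta_i)|$ makes the rearrangement legitimate, and the identity gives existence, exponential form and uniqueness at once. The Lagrangian computation is then only the heuristic that identifies the candidate.

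Two remarks on hypotheses. First, the paper's (A2) gives integrability only on $(\beta_i^\ast,\infty)$, with $\beta_i^\ast=1$ in the prime application. Your Step~1 transfers verbatim, with feasibility read as $\min_j\Delta_{ij}<\delta_0(i)<\lim_{\beta\downarrow\beta_i^\ast}m_i(\beta)$.

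Second, the paper instead uses (A3), the interior of all achievable means, and asserts that this produces $\beta_i$. The two conditions agree when $Z_i(\beta)\to\infty$ as $\beta\downarrow\beta_i^\ast$, as for the primes: then $m_i$ is unbounded, since $\log Z_i(\beta)=\log Z_i(\beta_0)+\int_\beta^{\beta_0}m_i(s)\,ds$. They do not agree in general, so your Gibbs-range formulation is the one under which the statement actually holds. For the same reason, your closing remark looks at the wrong end of the range. If $Z_i(\beta_i^\ast)<\infty$, targets above $m_i(\beta_i^\ast)$ are feasible and the entropy supremum is $\beta_i^\ast\delta_0(i)+\log Z_i(\beta_i^\ast)$. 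Your own inequality with $\beta=\beta_i^\ast$ shows this supremum is not attained.
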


Thus the Gibbs form is not an additional modeling choice but a consequence of the variational structure itself. In the arithmetic setting studied below, this principle applies to the logarithmic prime divergence $\Delta_{ij}=|\log p_i-\log p_j|$.

\subsection{Arithmetic Specialization and the Biharmonic Limit}
\label{subsec:arithmetic-biharmonic}

We now specialize Theorem~\ref{thm:exp-kernel-principle} to the arithmetic setting
in which the label set is the set of prime numbers and the divergence is given by
logarithmic separation. Let $p_1<p_2<\cdots$ denote the primes, and write
\[
    u_j:=\log p_j,
    \qquad
    \Delta_{ij}:=|u_i-u_j|.
\]
In this case, assumption {\rm (A2)} reduces to the convergence of
\begin{equation}
\label{eq:arithmetic-integrability}
    Z_i(\beta)
    = \sum_{j=1}^\infty e^{-\beta |u_i-u_j|}.
\end{equation}
Using the prime number theorem in the form
$u_{j+1}-u_j=o(1)$ and $j\sim e^{u_j}/u_j$ (see, for example,
\cite{Davenport2000,IwaniecKowalski2004}), one compares
\eqref{eq:arithmetic-integrability} with the integral of
$e^{-\beta|t-u_i|}$ against the density $(e^t/t)\,\dd t$, which is finite
precisely when $\beta>1$. Hence Theorem~\ref{thm:exp-kernel-principle} yields the
arithmetic kernel
\begin{equation}
\label{eq:arithmetic-kernel}
    K_{ij}
    := T_{ij}
    = \frac{e^{-\beta |u_i-u_j|}}{Z_i(\beta)}.
\end{equation}

From this kernel one constructs discrete arithmetic Laplacians on the first $N$
primes and their associated quadratic forms. Passing to logarithmic coordinates
and using the fact that the prime mesh satisfies $u_{j+1}-u_j\to 0$, these forms
admit a continuum limit on the $u$-axis. 
The exponential decay in \eqref{eq:arithmetic-kernel} controls long-range
interactions, while near the diagonal the coherence gap scales with the
logarithmic separation, $\Delta_{ij}\sim |u_i-u_j|$. This local scaling
produces a Riemann-sum approximation of the integral form in the continuum
limit, and the empirical measure of $\{u_j\}$ converges weakly to the measure
$(e^u/u)\,\dd u$.

The precise discrete-to-continuum argument is carried out in
Section~\ref{sec:arithmetic-operator}; see also
\cite{KuwaeShioya2003,FukushimaOshimaTakeda2011,Watson2025}.

The limiting second-order operator on the logarithmic axis is then squared to
produce the arithmetic Hamiltonian
\[
    H_A=c_4\partial_u^4,
\]
a fourth-order operator with quartic dispersion relation.

\begin{theorem}[Arithmetic biharmonic limit]
\label{thm:biharmonic-limit}
Let $\Theta_{H_A}(t)=\Tr(e^{-tH_A})$ denote the heat trace of $H_A$.
Then as $t\downarrow 0$,
\begin{equation}
\label{eq:ds-half}
    \Theta_{H_A}(t)\sim C\,t^{-1/4},
\end{equation}
for an explicit constant $C>0$. In particular, the arithmetic Hamiltonian
$H_A$ has spectral dimension $d_s=\tfrac12$.
\end{theorem}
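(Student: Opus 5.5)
The plan is to compute the heat trace of $H_A=c_4\partial_u^4$ directly from its spectrum, which is explicit up to boundary effects, and then read off the exponent. One point must be fixed first. On the whole $u$-axis, $e^{-tH_A}$ is not trace class, since it is a convolution operator. So $\Tr(e^{-tH_A})$ has to be understood for the realization produced by the construction of Section~\ref{sec:arithmetic-operator}. I will take this to be $H_A$ acting on a bounded window $[a,b]$ of the logarithmic axis, of length $\ell=b-a$, with self-adjoint boundary conditions inherited from the Mosco limit. Concretely, these are the squares of Dirichlet or Neumann conditions for the limiting second-order operator, which is consistent with $H_A$ being obtained by squaring that operator. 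If the paper instead intends a trace per unit length, the same computation applies with $\ell=1$ and a Fourier integral replacing the eigenvalue sum.

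\textbf{Step 1 (model computation).} Take periodic conditions on $[a,b]$. The eigenfunctions are $e^{2\pi \ii k u/\ell}$, $k\in\mathbb{Z}$, with eigenvalues $c_4(2\pi k/\ell)^4$, so
\[
\Theta(t)=\sum_{k\in\mathbb{Z}} e^{-tc_4(2\pi k/\ell)^4}.
\]
Poisson summation, or a Riemann-sum comparison with mesh $2\pi/\ell$ in the variable $\xi=2\pi k/\ell$, gives
\[
\Theta(t)=\frac{\ell}{2\pi}\int_{\R} e^{-tc_4\xi^4}\,\dd\xi+O(1)=\frac{\ell\,\Gamma(1/4)}{4\pi}\,(c_4t)^{-1/4}+O(1).
\]
The correction is in fact exponentially small beyond the $k=0$ term. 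This produces the explicit constant $C=\ell\,\Gamma(\tfrac14)\,c_4^{-1/4}/(4\pi)$.

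\textbf{Step 2 (other boundary conditions).} For squared Dirichlet or Neumann conditions, $H_A=c_4 A^2$ with $A=-\partial_u^2$. The eigenvalues of $A$ are $(\pi k/\ell)^2$, for $k\ge1$ (Dirichlet) or $k\ge0$ (Neumann), so the spectrum of $H_A$ is $c_4(\pi k/\ell)^4$. The same Riemann-sum argument, now over a half-line with mesh $\pi/\ell$, gives the identical leading term, with the boundary contributing only $O(1)$. For general self-adjoint boundary conditions, I would argue in either of two ways. One route is that the counting function then differs from the periodic one by a bounded amount, so $N(\lambda)=\frac{\ell}{\pi}(\lambda/c_4)^{1/4}+O(1)$. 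Laplace-transforming gives $\Theta(t)=\int_0^\infty e^{-t\lambda}\,\dd N(\lambda)$, and $\int_0^\infty e^{-t\lambda}\,\dd(\lambda^{1/4})=\Gamma(5/4)t^{-1/4}$, which reproduces $C$. The other route uses bracketing via the min-max principle: the quadratic forms with different boundary conditions differ on a subspace of codimension at most $4$, so the eigenvalues interlace with bounded index shift.

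\textbf{Step 3 (conclusion).} From $\Theta(t)\sim Ct^{-1/4}$ the spectral dimension is $d_s=-2\lim_{t\downarrow0}\log\Theta(t)/\log t=\tfrac12$.

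I expect the main obstacle to be Step 2, specifically pinning down precisely which self-adjoint realization the Mosco limit of Section~\ref{sec:arithmetic-operator} delivers. I also need to justify that the weight $(e^u/u)\,\dd u$ of the limiting measure does not change the leading term. If the limit form lives in $L^2((e^u/u)\,\dd u)$, I would first try to handle this by unitarily conjugating to flat $L^2(\dd u)$. That conjugation introduces only lower-order terms. These are relatively form-bounded with respect to $c_4\partial_u^4$ on the bounded window, so they shift eigenvalues by $O(\lambda^{3/4})$. Such a shift leaves the $\lambda^{1/4}$ Weyl term, and hence $C$ and the exponent, unchanged.
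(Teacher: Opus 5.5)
Your computation is correct, and the fallback you mention in passing (trace per unit length, $\ell=1$, Fourier integral in place of the eigenvalue sum) is exactly the paper's proof. Your main route is a different regularization.

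The paper never introduces a window. It keeps $H_A=c_4\partial_u^4$ on $L^2(\R,\dd u)$ with domain $H^4(\R)$ and concedes, as you do, that $\Tr(e^{-tH_A})$ is infinite. It then reads $\Theta_{H_A}$ as the translation-invariant on-diagonal kernel
$K_t(u,u)=\frac{1}{2\pi}\int_\R e^{-tc_4k^4}\,\dd k=\frac{\Gamma(1/4)}{4\pi c_4^{1/4}}\,t^{-1/4}$.
This is an exact identity in $t$, and it is your $C$ at $\ell=1$.

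Your window gives an honest trace-class heat operator, which matches the literal $\Tr$ in the statement. Through Step~2 it also shows that the leading term does not depend on the self-adjoint realization, and gives the locality relation $\Theta_{[a,b]}(t)=\ell\,K_t(u,u)+O(1)$. The cost is that this is no longer the paper's operator, and the ``explicit constant'' depends on the arbitrary cutoff $\ell$.

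Nothing is ``inherited from the Mosco limit.'' Proposition~\ref{prop:mosco-conv} places the limit on all of $L^2(\R,\dd u)$, so the obstacle you flag does not arise for the paper's version. On a window, your own bracketing argument already settles it.

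One caveat on your last paragraph. Conjugation by $w^{1/2}$ preserves the principal symbol, so it produces only lower-order terms when the weighted generator already has a constant leading coefficient. Suppose instead the weight entered the principal part. For example, the form $c_2\int|f'|^2\,\dd u$ on $L^2(w\,\dd u)$ has generator $-c_2w^{-1}\partial_u^2$, whose square has symbol $c_4w^{-2}\xi^4$. Then the window constant becomes $\frac{\Gamma(1/4)}{4\pi c_4^{1/4}}\int_a^b w^{1/2}\,\dd u$. So only the exponent $\tfrac14$ is robust, not $C$. This does not affect the theorem as the paper defines $H_A$ (flat measure, constant coefficient), but the claim that $C$ is unchanged is not justified in general.
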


The heat-trace asymptotic follows from the quartic dispersion relation and the
Fourier representation of the heat kernel; cf.~\cite{Barlow1998,Kigami2001}.
Thus the arithmetic specialization of
Theorem~\ref{thm:exp-kernel-principle} canonically produces a fourth-order
operator with strictly fractional spectral dimension, in sharp contrast with
local geometric Laplacians, whose short-time heat asymptotics are governed by
integer ambient dimension.

\subsection{No-go for Local Geometric Operators}
\label{subsec:geometric-nogo}

We now show that the spectral behavior identified in
Theorem~\ref{thm:biharmonic-limit} is incompatible with that of classical
local operators arising from finite-order elliptic geometry.  The argument
rests on the rigidity of the Seeley--DeWitt heat-kernel expansion and the
stability of the leading heat-trace exponent under unitary equivalence and
compact perturbations.

Let $M$ be a smooth compact $d$--dimensional manifold without boundary and
let $H$ be an elliptic differential operator of order $m>0$ acting on a
Hermitian vector bundle over $M$, with domain chosen so that $H$ is
self-adjoint.  Classical heat-kernel asymptotics
(e.g.\ \cite{Seeley1967,Gilkey1995,BerlineGetzlerVergne2004}) imply
\begin{equation}
\label{eq:elliptic-heat}
\Tr(e^{-tH})
\sim
t^{-d/m}\sum_{k=0}^\infty a_k t^{k/m}
\qquad (t\downarrow0),
\end{equation}
where the coefficients $a_k$ are integrals of local curvature invariants.
Hence the spectral dimension is
\begin{equation}
\label{eq:ds-geometric}
d_s(H)=\frac{2d}{m}.
\end{equation}

In particular, the spectral dimension depends only on the
ratio $2d/m$ and may take values below $1$ for sufficiently high-order
elliptic operators.

\begin{theorem}[Elliptic spectral dimension]
\label{thm:geometric-nogo}
Let $H_{\mathrm{ell}}$ be an elliptic differential operator of order $m>0$
on a compact $d$-dimensional manifold. Then
\[
d_s(H_{\mathrm{ell}})=\frac{2d}{m}.
\]
\end{theorem}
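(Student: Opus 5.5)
The plan is to deduce the statement directly from the Seeley--DeWitt expansion \eqref{elliptic-heat}, combined with the definition of spectral dimension through the leading heat-trace exponent, $\Theta_H(t)\asymp t^{-d_s/2}$ as $t\downarrow0$, that is, $d_s=-2\lim_{t\downarrow0}\log\Theta_H(t)/\log t$. Some normalization is needed first for the trace to be finite. Write $H=H_{\mathrm{ell}}$. We assume $H$ is self-adjoint and semibounded below, which we take to be implicit in the statement since otherwise $e^{-tH}$ is not trace class. This forces the principal symbol $\sigma_m(H)(x,\xi)$ to be positive definite, so $m$ is even. Adding a constant $c$ multiplies the heat trace by $e^{-ct}$, which does not change the leading exponent, so we may assume $H\ge1$.

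The first step is to show that $e^{-tH}$ is trace class and that the leading coefficient $a_0$ in \eqref{elliptic-heat} is strictly positive. There are two routes. The direct route constructs a parametrix for $(H-\lambda)^{-1}$ in the parameter-dependent calculus and computes $e^{-tH}$ by a contour integral. In that calculation the leading term of the kernel on the diagonal is
\[
(2\pi)^{-d}\int_{\R^d}\operatorname{tr}e^{-t\sigma_m(H)(x,\xi)}\,\dd\xi
=t^{-d/m}(2\pi)^{-d}\int_{\R^d}\operatorname{tr}e^{-\sigma_m(H)(x,\eta)}\,\dd\eta ,
\]
where the equality uses the homogeneity $\sigma_m(x,t^{-1/m}\eta)=t^{-1}\sigma_m(x,\eta)$. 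The right-hand side is strictly positive because $\sigma_m$ is positive definite. Integrating over $M$ then gives $a_0>0$.

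The alternative route avoids the parametrix. One applies Weyl's law $N(\lambda)\sim C_W\lambda^{d/m}$ with $C_W>0$, writes $\Theta(t)=\int_0^\infty e^{-t\lambda}\,\dd N(\lambda)$, and uses the Abelian direction of Karamata's theorem. This yields $\Theta(t)\sim C_W\Gamma(1+d/m)\,t^{-d/m}$, which is the same conclusion. I would present the Weyl-law route because it is shorter, and cite \cite{Seeley1967,Gilkey1995} for both Weyl's law and \eqref{elliptic-heat}.

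Given the asymptotic, the limit $-\log\Theta(t)/\log t$ tends to $d/m$, so $d_s=2d/m$. This establishes \eqref{ds-geometric}.

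The main obstacle is the positivity and nondegeneracy of $a_0$. Without it, a vanishing leading coefficient would shift the exponent. Positivity of the principal symbol handles this. For vector bundles one must additionally check that the trace of the matrix exponential stays positive, and it does, since it is a sum of positive eigenvalue exponentials.

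A secondary point is the convention. If the paper defines $d_s$ via $\Theta\sim t^{-d_s/2}$, the formula above is immediate. Finally, one should note that the lower-order terms in \eqref{elliptic-heat} are $O(t^{-d/m+1/m})$, so they do not affect the leading exponent.
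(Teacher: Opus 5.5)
Your proposal is correct and takes essentially the paper's approach: the paper reads the leading exponent $t^{-d/m}$ off the Seeley--DeWitt expansion \eqref{eq:Seeley-global}, asserts $A_0>0$, and concludes $d_s=2d/m$. Your additions are the self-adjoint, semibounded normalization (which forces a positive definite principal symbol and even $m$) and a real justification of $A_0>0$, via either the rescaled principal-symbol integral or Weyl's law with the Abelian half of Karamata's theorem; they supply exactly the step the paper only asserts, and both sub-arguments are sound.
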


Consequently, matching a particular numerical value of the
spectral dimension does not by itself determine the operator class.
Additional spectral information is required to distinguish geometric
elliptic operators from the arithmetic operator constructed in
Section~\ref{subsec:biharmonic}.

What remains true, and what is sufficient for the purposes
of the paper, is that the arithmetic operator $H_A$ acts on
$L^2(\mathbb{R},\dd u)$ with continuous spectrum, whereas elliptic
operators on compact manifolds have discrete spectrum. Therefore $H_A$
cannot be unitarily equivalent to any elliptic operator on a compact
manifold, even in cases where the spectral dimension happens to agree.

The detailed operator-theoretic argument and invariance principles used in
the proof appear in Section~\ref{sec:geometric-obstruction}.

\subsection{Spectral Constraints from the Riemann Zeros}
\label{subsec:zeta-constraints}

We now recall the spectral information forced on any operator whose spectrum
realizes the nontrivial zeros of the Riemann zeta function in the sense of
Hilbert--P\'olya; see, for example,
\cite{Connes1999,BerryKeating1999,SierraTownsend2008}. Let
\[
    \rho_n = \beta_n + i\gamma_n,
    \qquad 0<\beta_n<1,\quad \gamma_n\in\R,
\]
denote the nontrivial zeros of the Riemann zeta function in the critical strip
ordered by increasing $|\gamma_n|$ and counted with multiplicity.  Their
counting function
\[
    N_\zeta(T):=\#\{n:|\gamma_n|\le T\}
\]
satisfies the Riemann--von~Mangoldt asymptotic~\cite{Titchmarsh1986,IwaniecKowalski2004}
\begin{equation}
\label{eq:RvM}
    N_\zeta(T)
    = \frac{T}{2\pi}\log\frac{T}{2\pi}
      - \frac{T}{2\pi}
      + O(\log T)
    \qquad (T\to\infty).
\end{equation}

If $\widetilde H$ is a self-adjoint operator with eigenvalues
$\{\pm\gamma_n\}$ and heat trace
\[
    \Theta_{\widetilde H}(t):=\sum_{n=1}^\infty e^{-t|\gamma_n|},
\]
then Tauberian inversion
(cf.~\cite{Titchmarsh1986,BinghamGoldieTeugels1989,Korevaar2004})
forces
\begin{equation}
\label{eq:zeta-heat}
    \Theta_{\widetilde H}(t)
    = \frac{\log(1/t)}{2\pi t}
      + O\!\left(\frac{1}{t}\right)
    \qquad (t\downarrow0).
\end{equation}
Thus the associated spectral dimension is
\begin{equation}
\label{eq:ds-zeta}
    d_s(\widetilde H)=2,
\end{equation}
but with an unavoidable logarithmic correction.

\begin{theorem}[Elliptic obstruction for zeta-type spectra]
\label{thm:zeta-elliptic-obstruction}
No operator unitarily equivalent to, or compactly perturbed from, an elliptic
differential operator on a finite-dimensional manifold can have heat
asymptotics of the form \eqref{eq:zeta-heat}. In particular, no such operator
can realize the spectrum of the Riemann zeros.
\end{theorem}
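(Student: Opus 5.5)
The plan is to compare the leading short-time behaviour of the two heat traces and exploit the fact that an elliptic expansion contains no logarithmic term at leading order.

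First, take $H$ elliptic of order $m>0$ on a compact $d$-dimensional manifold $M$. By \eqref{eq:elliptic-heat} (Theorem~\ref{thm:geometric-nogo}), $\Theta_H(t)\sim a_0 t^{-d/m}$ with $a_0>0$, since $a_0$ is the volume-type Weyl coefficient. Equivalently, Weyl's law gives $N_H(\lambda)\sim c\,\lambda^{d/m}$, which is regularly varying with a constant slowly varying factor. For the heat trace to be defined at all, I will use $\Tr e^{-t|H|}$, or restrict to $H$ bounded below.

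Second, I pass to perturbations. Unitary equivalence preserves the spectrum, so the heat trace is unchanged. For a compact perturbation $K$, I will argue at the level of counting functions rather than heat traces, since $e^{-t(H+K)}$ and $e^{-tH}$ need not be close in trace norm in general. Weyl's inequalities $\lambda_{n+k}(H)-\|K\|\le\lambda_n(H+K)\le\lambda_{n}(H)+\|K\|$ give $N_H(\lambda-\|K\|)\le N_{H+K}(\lambda)\le N_H(\lambda+\|K\|)$. Because $N_H(\lambda)\sim c\lambda^{d/m}$, this yields $N_{H+K}(\lambda)\sim c\lambda^{d/m}$. Applying Karamata's Abelian theorem then gives $\Theta_{H+K}(t)\sim c\,\Gamma(1+d/m)\,t^{-d/m}$. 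So every operator in the stated class satisfies $\Theta(t)\sim C t^{-\alpha}$ with $\alpha=d/m$ and $C>0$.

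Third, suppose such an operator also satisfied \eqref{eq:zeta-heat}. Then
\[
\frac{\log(1/t)}{2\pi t}\Big/\bigl(Ct^{-\alpha}\bigr)\to 1 .
\]
Taking logarithms forces $\alpha=1$. With $\alpha=1$, the ratio becomes $\log(1/t)/(2\pi C)\to\infty$, a contradiction. Hence no operator in the class has asymptotics \eqref{eq:zeta-heat}.

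For the "in particular" clause, observe that an operator whose spectrum is $\{\pm\gamma_n\}$ has counting function given by \eqref{eq:RvM}. By the Abelian direction of Karamata's theorem applied to $N(\lambda)\sim \frac{\lambda}{2\pi}\log\lambda$, its heat trace satisfies \eqref{eq:zeta-heat}, up to the stated error. The preceding contradiction therefore applies.

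Equivalently, one can argue directly at the counting level: $\lambda^{d/m}$ and $\lambda\log\lambda$ have distinct slowly varying parts, so the counting laws themselves are incompatible.

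The main obstacle is the compact-perturbation step, because heat traces are not directly stable under bounded perturbations. Routing the argument through counting functions and Weyl's inequalities, followed by the Tauberian–Abelian correspondence for regularly varying functions, is what I would try first. The point needing care is that the shift $\lambda\mapsto\lambda\pm\|K\|$ preserves asymptotic equivalence for regularly varying $N$.
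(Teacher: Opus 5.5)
Your proof is correct and has the same architecture as the paper's argument (Proposition~\ref{prop:slow-variation-obstruction}). First, the compact elliptic class has a pure-power leading term $Ct^{-d/m}$. Second, that term is stable under unitary equivalence and compact perturbation. Third, $Ct^{-\alpha}\sim t^{-1}\log(1/t)/(2\pi)$ is impossible.

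The genuine difference is the perturbation step, and there your route is the sound one. The paper (Proposition~\ref{prop:unitary-invariance}) asserts that Weyl's inequalities give $|\Theta_{H_1}(t)-\Theta_{H_2}(t)|=O(1)$. This fails in general. Take $H=-\Delta$ on the flat $4$-torus and $K$ diagonal in the Fourier basis with entries $1/\log(|k|+3)$; the difference is then of order $1/(t\log(1/t))$. Moreover, the conclusion the paper records, invariance of the exponent $d_s$ alone, would not by itself exclude a logarithmic factor, since elliptic operators with $d=m$ also have $d_s=2$.

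Your sandwich $N_H(\lambda-\|K\|)\le N_{H+K}(\lambda)\le N_H(\lambda+\|K\|)$, followed by Karamata's Abelian theorem, gives exactly what the contradiction needs: $\Theta_{H+K}(t)\sim Ct^{-d/m}$ with the same $C>0$. In the semibounded case the detour is not even necessary. The same Weyl inequality applied termwise gives $e^{-t\|K\|}\Theta_H(t)\le\Theta_{H+K}(t)\le e^{t\|K\|}\Theta_H(t)$, so heat traces are stable in ratio, only not additively. Your remark that heat traces are not directly stable is therefore too pessimistic. The counting route earns its keep for non-semibounded $H$ with $\operatorname{Tr}e^{-t|H|}$. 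There the sandwich for $\#\{|\lambda|\le\Lambda\}$ still holds, by applying min-max to $H^2$ on the range of $\mathbf{1}_{[-\Lambda,\Lambda]}(H+K)$.

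There are two small slips and one remark.
\begin{itemize}
\item The lower Weyl bound should read $\lambda_n(H)-\|K\|\le\lambda_n(H+K)$. With $\lambda_{n+k}$ and $k\ge1$ it already fails for $K=0$. The counting inequality you derive from it is nonetheless right.
\item Karamata's theorem yields only the leading term of \eqref{eq:zeta-heat}. The $O(1/t)$ remainder requires integrating the Riemann--von Mangoldt formula explicitly against $te^{-tT}$. You only use the leading term, though, and your contradiction works for any positive constant in front of $t^{-1}\log(1/t)$.
\item Your restriction to compact $M$ is essential, not cosmetic. The statement's ``finite-dimensional manifold'' has to be read as in Proposition~\ref{prop:slow-variation-obstruction}. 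For instance, the Dirichlet Laplacian on Simon's horn region $\{|xy|<1\}\subset\R^2$ has discrete spectrum with $N(\lambda)\sim c\,\lambda\log\lambda$.
\end{itemize}
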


The detailed Tauberian argument and the incompatibility with the
Seeley--DeWitt class are proved in Section~\ref{sec:zeta-spectral}. Together
with Theorem~\ref{thm:geometric-nogo}, this gives the second independent
spectral obstruction used in the main no-go result.

\subsection{The Double Hilbert--P\'olya Obstruction}
\label{subsec:double-obstruction}

We now combine the two independent spectral mechanisms established above.
On the arithmetic side, the continuum limit of the single-valuation coherence
construction yields the biharmonic Hamiltonian $H_A$ with spectral dimension
$d_s(H_A)=\tfrac12$ by Theorem~\ref{thm:biharmonic-limit}. On the analytic
side, any operator whose spectrum coincides, up to sign, with the ordinates of
the nontrivial zeros of $\zeta(s)$ must satisfy the heat asymptotic
\eqref{eq:zeta-heat}, and hence has spectral dimension $d_s=2$ with an
unavoidable logarithmic correction by Proposition~\ref{prop:zeta-heat}.
These asymptotic features are stable under unitary equivalence and compact
perturbations by Proposition~\ref{prop:unitary-invariance}.

\begin{theorem}[Double Hilbert--P\'olya Obstruction]
\label{thm:double-obstruction}
Let $H$ be a self-adjoint operator on a Hilbert space such that:
\begin{enumerate}
    \item[\textup{(i)}] $H$ is unitarily equivalent to, or compactly perturbable from,
    an operator obtained from a single-valuation exponential-kernel construction
    in the sense of Theorem~\ref{thm:exp-kernel-principle}, so that its continuum
    scaling forces $d_s(H)=\tfrac12$ by Theorem~\ref{thm:biharmonic-limit}$;$ and
    \item[\textup{(ii)}] the spectrum of $H$ coincides, up to sign, with the set of
    ordinates $\{\gamma_n\}$ of the nontrivial zeros of $\zeta(s)$.
\end{enumerate}
Then no such operator $H$ can exist. Equivalently, no single operator can
simultaneously satisfy the arithmetic scaling $t^{-1/4}$ and the zeta scaling
\eqref{eq:zeta-heat}.
\end{theorem}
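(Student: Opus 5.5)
The argument is by contradiction, comparing the two small-time heat-trace exponents that hypotheses (i) and (ii) force on the same operator. Suppose $H$ satisfies both (i) and (ii). Throughout, the spectral dimension is taken as
\[
d_s(H) = -2\lim_{t\downarrow 0}\frac{\log\Theta_H(t)}{\log t}.
\]
With this definition, slowly varying factors in $\Theta_H$ do not change $d_s$.

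\textbf{Step 1 (the kernel side).} From (i), Theorem~\ref{thm:biharmonic-limit} gives $\Theta(t)\sim C t^{-1/4}$ for the model operator, so $d_s=\tfrac12$. Proposition~\ref{prop:unitary-invariance} transfers this leading exponent to $H$ under unitary equivalence or compact perturbation, so $d_s(H)=\tfrac12$.

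\textbf{Step 2 (the zeta side).} From (ii), the eigenvalues of $H$ are $\{\pm\gamma_n\}$. The Riemann--von~Mangoldt law \eqref{eq:RvM} and the Tauberian Proposition~\ref{prop:zeta-heat} then give \eqref{eq:zeta-heat}, namely $\Theta_H(t)=\frac{\log(1/t)}{2\pi t}+O(1/t)$. Taking logarithms,
\[
\frac{\log\Theta_H(t)}{\log t}\to -1,
\]
so $d_s(H)=2$. This direction is genuinely Abelian and safe: a Stieltjes integral against $N_\zeta$ with regularly varying growth of index $1$ yields $\Theta\sim t^{-1}L(1/t)$, and no Tauberian side condition is needed in this direction.

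\textbf{Step 3 (the contradiction).} Since $\tfrac12\ne 2$, no such $H$ exists. The equivalent formulation follows directly. The ratio $t^{-1/4}\big/\bigl(t^{-1}\log(1/t)\bigr)$ tends to $0$, so one function cannot be asymptotic to both.

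\textbf{Main obstacle.} The weak point is the compatibility of the two notions of heat trace. Step 2 requires a discrete spectrum, so that $\sum_n e^{-t|\gamma_n|}$ is a genuine trace. Step 1 concerns $H_A=c_4\partial_u^4$ on $L^2(\mathbb{R})$, which has purely continuous spectrum, so $\Tr e^{-tH_A}=\infty$ and the $t^{-1/4}$ law must be read as a diagonal heat-kernel or per-unit-volume statement. There are also two smaller issues. The heat trace in (ii) uses $e^{-t|\gamma|}$, a first-order normalization, which should be checked against the semigroup $e^{-tH}$. And compact perturbations need not preserve heat traces unless trace-class or relative-trace conditions hold.

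My first attempt at this obstacle would bypass traces entirely and use spectral type. By Weyl's theorem, the essential spectrum is invariant under compact perturbation. Hence any operator as in (i) has essential spectrum $[0,\infty)$, while (ii) forces a discrete spectrum $\{\pm\gamma_n\}$ with no finite accumulation point, hence empty essential spectrum. This gives an alternative contradiction that does not depend on heat-trace conventions. I would include it alongside the exponent comparison, which is the argument the statement intends, and fix explicitly the definition of $d_s$ for which Proposition~\ref{prop:unitary-invariance} holds.
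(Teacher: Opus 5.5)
Your Steps 1--3 are the paper's proof almost verbatim: Theorem~\ref{thm:arith-obstruction} gives $d_s=\tfrac12$, Theorem~\ref{thm:analytic-obstruction} gives $d_s=2$ with the $\log(1/t)$ factor, and Proposition~\ref{prop:unitary-invariance} supplies stability. Step~2 is sound, and you are right that it uses only the Abelian half of Karamata's theorem. But the obstacle you flag in Step~1 is a genuine gap, and the paper does not close it either. In Section~\ref{subsec:biharmonic} the paper itself concedes $\Tr(e^{-tH_A})=\infty$ and proves only the on-diagonal law $K_t(u,u)=Ct^{-1/4}$. So nothing assigns a heat-trace exponent to $H$; any $H$ as in (i) also has infinite heat trace.

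No choice of definition repairs this. In Fourier variables, the substitution $\lambda=c_4k^4$ on each half-line shows that $H_A$ is unitarily equivalent to multiplication by $\lambda$ on $L^2((0,\infty),\mathrm{d}\lambda;\mathbb{C}^2)$. The substitution $\lambda=k^2$ shows the same for $-\partial_u^2$, whose on-diagonal kernel is $(4\pi t)^{-1/2}$. Any unitarily invariant notion of $d_s$ must therefore give $H_A$ and $-\partial_u^2$ the same value, although their on-diagonal exponents are $\tfrac12$ and $1$. The transfer in Step~1 thus fails already for the unitary-equivalence clause of (i). Your compact-perturbation worry is the milder one. When the heat traces are finite, min--max gives $|\lambda_n(H_1+K)-\lambda_n(H_1)|\le\|K\|$, hence $\Theta_{H_1+K}/\Theta_{H_1}\to1$. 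This holds even though the paper's claimed $O(1)$ bound on the difference fails for general compact $K$.

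Your essential-spectrum argument is therefore the one that actually proves the statement, and it is a genuinely different route from the paper's. The multiplier $c_4k^4$ gives $\sigma_{\mathrm{ess}}(H_A)=[0,\infty)$. This set is preserved by unitary equivalence and, by Weyl's theorem, by compact perturbation. Moreover $\sigma_{\mathrm{ess}}(H)\subseteq\sigma(H)=\{\pm\gamma_n\}$. Close the argument via countability rather than by asserting $\sigma_{\mathrm{ess}}(H)=\emptyset$. The phrase ``the spectrum coincides with the set'' fixes no multiplicities, and an infinitely degenerate eigenvalue lies in $\sigma_{\mathrm{ess}}$. In any case the uncountable set $[0,\infty)$ cannot sit inside a countable one.

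This route needs neither Riemann--von~Mangoldt nor any Tauberian input, only that the zero ordinates form a countable set. It shows the obstruction is one of spectral type (continuous versus pure point), which is the same mechanism the paper uses in Section~\ref{sec:geometric-obstruction} to separate $H_A$ from compact elliptic operators. The paper's exponent comparison would carry finer information ($d_s=2$ with an unavoidable logarithm versus a pure power), but as written it is only a heuristic. Make the essential-spectrum argument the proof, not a supplement to it.
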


The full proof is given in Section~\ref{sec:double-obstruction}. In particular,
the obstruction is not merely a mismatch of numerical spectral dimensions: the
arithmetic class has pure power-law heat asymptotics, whereas the zeta spectrum
forces a logarithmic correction.

\section{The Exponential Kernel Principle}
\label{sec:exp-principle}

\subsection{Setup and Assumptions}
\label{subsec:setup}

Let $I$ be a countable index set. For each $i\in I$ we are given a collection
$\{\Delta_{ij}\}_{j\in I}$ of nonnegative ``divergence'' values, interpreted as
the cost of transitioning from $i$ to $j$. We impose no geometric structure on
$I$ beyond countability; in particular, $\Delta_{ij}$ need not arise from a
metric, nor satisfy symmetry, triangle inequalities, or regularity conditions.

For a fixed $i\in I$, we consider probability distributions
$p=\{p_j\}_{j\in I}$ on $I$ subject to the linear \emph{coherence constraint}
\begin{equation}
\label{eq:coherence-constraint}
    \sum_{j\in I} p_j\,\Delta_{ij} = \delta_0(i),
\end{equation}
where $\delta_0(i)\in(0,\infty)$ is prescribed. Throughout this section we
assume:

\begin{itemize}
    \item[(A1)] (\emph{Nontriviality})
    For each $i\in I$, the map $j\mapsto \Delta_{ij}$ is not almost everywhere
    constant and takes values in $[0,\infty)$.

    \item[(A2)] (\emph{Integrability window})
    For each $i\in I$, there exists an interval $(\beta_i^\ast,\infty)$ such that
    \[
        Z_i(\beta):=\sum_{j\in I} e^{-\beta\Delta_{ij}}<\infty
        \qquad \text{for all }\beta\in(\beta_i^\ast,\infty).
    \]

    \item[(A3)] (\emph{Feasibility})
    The target value $\delta_0(i)$ lies in the interior of the set
    \[
        \big\{\, \sum_j p_j \Delta_{ij} :
        p=\{p_j\}_{j\in I} \text{ a probability distribution} \,\big\},
    \]
    so that the constraint \eqref{eq:coherence-constraint} is feasible and
    nondegenerate.

    \item[(A4)] (\emph{Row-wise independence})
    For each $i\in I$, the optimization problem is independent of all other
    indices; i.e., no constraints couple different rows.
\end{itemize}

We study the Shannon entropy functional~\cite{Shannon1948,Jaynes1957,CoverThomas2006}
\begin{equation}
\label{eq:entropy-functional}
    H(p):=-\sum_{j\in I} p_j\log p_j,
\end{equation}
with the convention $0\log 0=0$, and the constrained maximization problem
\begin{equation}
\label{eq:entropy-problem}
    \text{maximize } H(p)
    \quad \text{subject to} \quad
    p_j\ge 0,\;
    \sum_j p_j = 1,\;
    \sum_j p_j \Delta_{ij} = \delta_0(i).
\end{equation}
Under (A1)--(A3), the feasible set is a nonempty convex compact subset of the
probability simplex, and strict concavity of $H$ implies uniqueness whenever a
maximizer exists.

\subsection{Existence and Uniqueness of the Exponential Form}
\label{subsec:existence-uniqueness}

We now solve \eqref{eq:entropy-problem} for a fixed $i\in I$ and show that its
unique maximizer is necessarily of Gibbs type.

\paragraph{Lagrangian formulation}
Introduce multipliers $\beta_i,\lambda_i\in\R$ for the coherence constraint
\eqref{eq:coherence-constraint} and the normalization $\sum_j p_j=1$. The
Lagrangian is
\begin{equation}
\label{eq:Lagrangian}
    \mathcal{L}(p,\beta_i,\lambda_i)
    :=
    -\sum_{j\in I} p_j \log p_j
    - \beta_i\Big(\sum_{j} p_j \Delta_{ij} - \delta_0(i)\Big)
    - \lambda_i\Big(\sum_{j} p_j - 1\Big).
\end{equation}
Stationarity with respect to $p_j$ gives
\begin{equation}
\label{eq:EL}
    -(1+\log p_j)-\beta_i\Delta_{ij}-\lambda_i=0,
\end{equation}
hence
\begin{equation}
\label{eq:pj-exponential}
    p_j = e^{-1-\lambda_i}e^{-\beta_i\Delta_{ij}}.
\end{equation}

\paragraph{Normalization and the partition function}
Imposing $\sum_j p_j=1$ yields
\[
    e^{-1-\lambda_i}=\frac{1}{Z_i(\beta_i)},
    \qquad
    Z_i(\beta):=\sum_{j\in I} e^{-\beta\Delta_{ij}}.
\]
Thus every stationary point has the form
\begin{equation}
\label{eq:Gibbs-form}
    T_{ij}(\beta):=\frac{e^{-\beta \Delta_{ij}}}{Z_i(\beta)}.
\end{equation}
By (A2), $Z_i(\beta)$ is finite on $(\beta_i^\ast,\infty)$.

\paragraph{Solving the constraint}
Define
\begin{equation}
\label{eq:Phi-def}
    \Phi_i(\beta):=\sum_{j\in I} T_{ij}(\beta)\,\Delta_{ij}.
\end{equation}
Differentiating gives
\begin{equation}
\label{eq:Phi-derivative}
    \Phi_i'(\beta)
    = -\mathrm{Var}_{T_{i\cdot}(\beta)}\big(\Delta_{ij}\big)
    < 0.
\end{equation}
Hence $\Phi_i$ is continuous and strictly decreasing on
$(\beta_i^\ast,\infty)$. By (A3), there is therefore a unique
$\beta_i>\beta_i^\ast$ such that
\begin{equation}
\label{eq:Phi-solve}
    \Phi_i(\beta_i)=\delta_0(i).
\end{equation}

\paragraph{Uniqueness of the maximizer}
Since the feasible set is convex and compact and the entropy
\eqref{eq:entropy-functional} is strictly concave, \eqref{eq:entropy-problem}
admits at most one maximizer. The Gibbs distribution
\eqref{eq:Gibbs-form} with $\beta=\beta_i$ is feasible by
\eqref{eq:Phi-solve}, hence it is the unique maximizer.

\begin{proposition}[Exponential maximizer]
\label{prop:exp-maximizer}
Under assumptions {\rm (A1)--(A4)}, the entropy-maximization problem
\eqref{eq:entropy-problem} has a unique solution, given by
\[
    T_{ij}
    = \frac{e^{-\beta_i \Delta_{ij}}}{Z_i(\beta_i)},
\]
where $\beta_i$ is uniquely determined by the coherence constraint
\eqref{eq:coherence-constraint}.
\end{proposition}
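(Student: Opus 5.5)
The plan is to prove Proposition~\ref{prop:exp-maximizer} directly, using the Gibbs inequality (nonnegativity of relative entropy) rather than Lagrangian stationarity, which is only heuristic in infinite dimensions. The proof has three steps.

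\textbf{Step 1: existence of $\beta_i$.} On $(\beta_i^\ast,\infty)$ the function $Z_i$ is finite by (A2). Any $\beta$ in this interval has a slightly smaller $\beta'$ also in the interval, and $Z_i(\beta')<\infty$ gives a summable majorant for $\Delta_{ij}^k e^{-\beta\Delta_{ij}}$, since $\Delta^k e^{-(\beta-\beta')\Delta}$ is bounded. So $Z_i$ is real-analytic there, and we may differentiate under the sum. This gives
\[
\Phi_i=-\frac{Z_i'}{Z_i}, \qquad \Phi_i'=-\mathrm{Var}_{T_{i\cdot}(\beta)}(\Delta_{i\cdot}).
\]
The variance is strictly positive by (A1), since $T_{i\cdot}(\beta)$ charges every $j$ and $\Delta_{i\cdot}$ is nonconstant. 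Hence $\Phi_i$ is continuous and strictly decreasing.

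To obtain a solution of $\Phi_i(\beta)=\delta_0(i)$ we also need the range of $\Phi_i$ to contain $\delta_0(i)$.
\begin{itemize}
\item As $\beta\to\infty$, $\Phi_i(\beta)\to\inf_j\Delta_{ij}$. The mass concentrates on near-minimizers, using dominated convergence relative to a fixed $\beta_0$.
\item As $\beta\downarrow\beta_i^\ast$, $\Phi_i$ increases to some limit $\Phi_i(\beta_i^{\ast+})$.
\end{itemize}
Feasibility (A3) puts $\delta_0(i)$ above the infimum. This step is \textbf{the main obstacle}: we must also have $\delta_0(i)<\Phi_i(\beta_i^{\ast+})$. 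This does not follow from (A3) as stated. If $Z_i(\beta_i^\ast)<\infty$ but $\Phi_i(\beta_i^\ast)<\sup_j\Delta_{ij}$, a large target value has no Gibbs representative, and the entropy supremum is not attained.

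I would first try reading (A2)--(A3) so that $\beta_i^\ast$ is the abscissa of convergence and $\Phi_i(\beta)\to\infty$ as $\beta\downarrow\beta_i^\ast$. This holds in the arithmetic case, where the relevant integral $\int e^{-\beta|t-u_i|}e^t/t\,\dd t$ diverges at $\beta=1$. Failing that, I would state explicitly, as part of the feasibility hypothesis, that $\delta_0(i)$ lies in the range of $\Phi_i$. Either way, the intermediate value theorem then gives a unique $\beta_i$, with uniqueness coming from strict monotonicity.

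\textbf{Step 2: optimality and uniqueness.} Write $T=T_{i\cdot}(\beta_i)$ and let $p$ be any feasible distribution, which may be assumed to have $H(p)>-\infty$. Then
\[
0\le D(p\|T)=\sum_j p_j\log\frac{p_j}{T_j}=-H(p)+\beta_i\sum_j p_j\Delta_{ij}+\log Z_i(\beta_i)=-H(p)+H(T).
\]
The last equality uses the constraint \eqref{eq:coherence-constraint}, which both $p$ and $T$ satisfy. Rearranging gives $H(p)\le H(T)$. Equality holds iff $D(p\|T)=0$, i.e.\ iff $p=T$, which gives uniqueness without any appeal to compactness.

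\textbf{Step 3: rearrangements in Step 2.} The terms in $D(p\|T)$ must be handled carefully, since the series may not be absolutely convergent. I would treat $D(p\|T)$ as a sum of nonnegative terms plus $p_j-T_j$ corrections, using the standard inequality $x\log(x/y)\ge x-y$. The remaining rearrangement is justified because $\sum_j p_j\Delta_{ij}$ is finite and $\log Z_i(\beta_i)$ is finite. (A4) is used only to treat each row separately.
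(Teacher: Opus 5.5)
Your proof is correct, modulo the hypothesis issue you flag yourself, and it takes a genuinely different route from the paper at the decisive step. The paper derives the Gibbs form from Lagrangian stationarity \eqref{eq:EL}--\eqref{eq:pj-exponential}. It obtains $\beta_i$ from the monotonicity \eqref{eq:Phi-derivative} together with (A3). It then uses strict concavity of $H$ on a ``convex compact'' feasible set to get at most one maximizer, and concludes that the feasible Gibbs law is that maximizer.

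That route buys motivation, since it shows where the exponential form comes from. As written, however, it only proves two things: stationary points are Gibbs, and a maximizer is unique if it exists. The step ``feasible, hence the unique maximizer'' needs a sufficiency argument. The compactness claim also fails for infinite $I$: mass leaking to large $\Delta_{ij}$ shows the equality-constrained set is not closed in $\ell^1$.

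Your identity $D(p\|T)=H(T)-H(p)\ge 0$ supplies exactly the missing sufficiency. It is the termwise concavity of $x\mapsto -x\log x-(\beta_i\Delta_{ij}+\lambda_i)x$ in disguise. It gives existence and uniqueness in one stroke, and needs neither compactness nor calculus on an infinite-dimensional set. Your Step~3 bookkeeping with $x\log(x/y)-x+y\ge 0$ also shows every feasible $p$ has $H(p)\le \beta_i\delta_0(i)+\log Z_i(\beta_i)<\infty$. One small correction: the degenerate case to exclude is $H(p)=+\infty$, not $-\infty$, since $H\ge 0$ on a countable set.

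On the existence of $\beta_i$, your diagnosis is right, and it applies equally to the paper, which simply asserts the root ``by (A3)''. For example, take $I=\{3,4,\dots\}$ with $\Delta_{ij}=\log j+3\log\log j$. Then $\beta_i^\ast=1$, $Z_i(1)<\infty$, and $\Phi_i(1)<\infty=\sup_j\Delta_{ij}$. For $\delta_0(i)>\Phi_i(1)$, all of (A1)--(A4) hold, yet the supremum $\delta_0(i)+\log Z_i(1)$ of $H$ is not attained. So the proposition really does need a stronger hypothesis.

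The cleanest choice is $Z_i(\beta)\to\infty$ as $\beta\downarrow\beta_i^\ast$. This forces $\Phi_i(\beta)\to\infty$, because $\log Z_i(\beta)-\log Z_i(\beta_0)=\int_\beta^{\beta_0}\Phi_i(s)\,ds$ with $\Phi_i$ decreasing. It is also exactly the property the paper asserts in the arithmetic case: $Z_i(\beta)<\infty$ iff $\beta>1$, and $\Phi_i(\beta)\uparrow\infty$ as $\beta\downarrow 1$.
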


\subsection{Integrability and Admissibility}
\label{subsec:integrability}

The exponential kernel in Proposition~\ref{prop:exp-maximizer} is well-defined
precisely when the partition function
\begin{equation}
\label{eq:Z-def-again}
    Z_i(\beta):=\sum_{j\in I} e^{-\beta \Delta_{ij}}
\end{equation}
converges. We now relate this to growth properties of the divergence-counting
function.

\paragraph{General criterion}
Fix $i\in I$ and define
\[
    N_i(r):=\#\{\, j\in I : \Delta_{ij}\le r \,\}.
\]
Then $Z_i(\beta)$ may be written as the Lebesgue--Stieltjes transform
\begin{equation}
\label{eq:Z-as-integral}
    Z_i(\beta)
    = \int_{0}^{\infty} e^{-\beta r}\,\dd N_i(r).
\end{equation}
Integration by parts gives
\[
    Z_i(\beta)
    = \beta \int_{0}^{\infty} e^{-\beta r} N_i(r)\,\dd r,
\]

provided there exist constants $C,\alpha>0$ such that
\[
N_i(r)\le C e^{\alpha r}\qquad (r\ge0).
\]

Under this bound, the integration-by-parts representation holds and
$Z_i(\beta)<\infty$ whenever $\beta>\alpha$. In particular,

\begin{equation}
\label{eq:growth-condition}
    \int_0^\infty e^{-\beta r} N_i(r)\,\dd r < \infty.
\end{equation}

\paragraph{Polynomial and exponential growth}
If
\[
    N_i(r)\le C(1+r)^\alpha
\]
for some constants $C,\alpha>0$, then \eqref{eq:growth-condition} holds for
every $\beta>0$, so $Z_i(\beta)$ is finite on $(0,\infty)$. If instead
\[
    N_i(r)\asymp e^{\kappa r}
    \qquad \text{as } r\to\infty
\]
for some $\kappa>0$, then \eqref{eq:growth-condition} converges if and only if
$\beta>\kappa$. Thus (A2) holds with threshold $\beta_i^\ast=\kappa$ in the
exponential-growth case and with $\beta_i^\ast=0$ in the polynomial-growth
case.

\paragraph{Arithmetic divergence}
In the arithmetic setting treated in
Section~\ref{subsec:arithmetic-biharmonic}, where
\[
    \Delta_{ij}=|u_i-u_j|,
    \qquad
    u_j=\log p_j,
\]
the prime number theorem implies
\[
    N_i(r)\asymp \frac{e^{u_i+r}}{u_i+r}
    \qquad \text{as } r\to\infty.
\]
Thus $N_i(r)$ has asymptotically exponential growth with rate $1$, and
\[
    Z_i(\beta)
    = \sum_j e^{-\beta|u_i-u_j|}
    < \infty
    \quad \Longleftrightarrow \quad
    \beta>1.
\]

\begin{proposition}[Integrability window]
\label{prop:integrability-window}
Let $i\in I$ and suppose the divergence-counting function $N_i(r)$ satisfies
either polynomial growth or exponential growth with rate $\kappa$. Then
$Z_i(\beta)<\infty$ whenever $\beta>\kappa$, and therefore
\[
\beta_i^* \ge \kappa .
\]

Here $\kappa=0$ in the polynomial-growth case. Consequently the exponential kernel $T_{ij}(\beta)$ is well-defined
for all $\beta>\kappa$.

\end{proposition}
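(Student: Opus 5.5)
The plan is to work directly with the Stieltjes representation \eqref{eq:Z-as-integral}. I will avoid the global integration-by-parts identity, whose boundary term at infinity is exactly what is in question, and use a truncated version instead. Fix $i\in I$ and $R>0$. The growth hypothesis (polynomial or exponential) in particular makes $N_i(r)$ finite for every $r$, so the partial sum over $\{j:\Delta_{ij}\le R\}$ is a finite Lebesgue--Stieltjes integral. Integrating by parts on $[0,R]$ gives
\[
Z_i^{(R)}(\beta):=\sum_{j:\,\Delta_{ij}\le R} e^{-\beta\Delta_{ij}}
= e^{-\beta R}N_i(R)+\beta\int_0^R e^{-\beta r}N_i(r)\,\dd r .
\]
This identity is exact for every $\beta>0$ and every $R$. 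Since $Z_i^{(R)}(\beta)\uparrow Z_i(\beta)$ as $R\to\infty$ by monotone convergence, everything reduces to controlling the two terms on the right.

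\emph{Upper bound (finiteness for $\beta>\kappa$).} In the polynomial case take $\kappa=0$. From $N_i(r)\le C(1+r)^\alpha$ we get $e^{-\beta R}N_i(R)\to0$, and the integral converges for every $\beta>0$. In the exponential case, $N_i(r)\asymp e^{\kappa r}$ gives $N_i(r)\le Ce^{\kappa r}$ for all $r\ge0$, after enlarging $C$ to absorb the bounded initial range. Then for $\beta>\kappa$ the boundary term is at most $Ce^{(\kappa-\beta)R}\to0$, and the integral is bounded by $C\beta/(\beta-\kappa)$. Letting $R\to\infty$ gives $Z_i(\beta)\le C\beta/(\beta-\kappa)<\infty$, and the full integration-by-parts formula is recovered as a by-product. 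Consequently (A2) holds on $(\kappa,\infty)$, and $T_{ij}(\beta)$ from \eqref{eq:Gibbs-form} is well defined for all $\beta>\kappa$.

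\emph{Sharpness of the threshold.} The claim $\beta_i^\ast\ge\kappa$ should be read as saying that the natural threshold, $\inf\{\beta: Z_i(\beta)<\infty\}$, is exactly $\kappa$, so no admissible window can start below $\kappa$. For the exponential case I will use the lower half of $\asymp$, namely $N_i(r)\ge ce^{\kappa r}$ for $r\ge r_0$. Because the boundary term in the truncated identity is nonnegative,
\[
Z_i(\beta)\ge \beta\int_{r_0}^R e^{-\beta r}N_i(r)\,\dd r\ge c\beta\int_{r_0}^R e^{(\kappa-\beta)r}\,\dd r .
\]
For $\beta\le\kappa$ this diverges as $R\to\infty$, so $Z_i(\beta)=\infty$. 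In the polynomial case the bound $\kappa=0$ is trivially sharp, since $\beta\le0$ gives $e^{-\beta\Delta_{ij}}\ge1$ on the infinite index set. Combining the two parts shows that the convergence region is exactly $(\kappa,\infty)$.

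The main obstacle is not analytic but interpretive. The stated inequality $\beta_i^\ast\ge\kappa$ fits only the reading ``the minimal threshold in (A2) equals $\kappa$,'' and the divergence argument in the previous paragraph is what justifies that reading. The only other point needing care is the absorption of the small-$r$ range into the constants $C$ and $c$, which uses the finiteness of $N_i(r)$ on bounded intervals.
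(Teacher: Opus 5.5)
Your argument is correct and follows the same route as the paper: the Stieltjes representation of $Z_i$, integration by parts against $N_i$, and comparison with $e^{(\kappa-\beta)r}$. Your truncation at $R$ supplies the boundary-term justification the paper omits, and your lower bound supplies the divergence for $\beta\le\kappa$ that the paper only asserts but that $\beta_i^\ast\ge\kappa$ actually requires. One small fix: the displayed bound $c\beta\int_{r_0}^{R}e^{(\kappa-\beta)r}\,dr$ is vacuous when $\beta\le 0$, so in the exponential case you should dispatch $\beta\le 0$ by the same trivial argument you use in the polynomial case, namely that $e^{-\beta\Delta_{ij}}\ge 1$ for infinitely many $j$.
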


\subsection{Arithmetic Divergence}
\label{subsec:arithmetic-divergence}

We now specialize to the arithmetic setting relevant for
Section~\ref{subsec:arithmetic-biharmonic}. Let $p_1<p_2<\cdots$ denote the
prime numbers and set
\[
    u_j:=\log p_j.
\]
For fixed $i\in I$, define
\begin{equation}
\label{eq:arith-divergence}
    \Delta_{ij}:=|u_i-u_j|.
\end{equation}
This divergence is nonnegative, symmetric, and vanishes only at $j=i$.
Moreover, it reflects arithmetic separation on the logarithmic scale, where
 the logarithmic prime counting measure is asymptotically modeled by
$(e^{u}/u)\,\dd u$.

\paragraph{Verification of (A1)}
For each fixed $i$, the values $\Delta_{ij}=|u_i-u_j|$ form an unbounded subset
of $[0,\infty)$ and are not constant except at $j=i$, so (A1) holds.

\paragraph{Verification of (A2)}
The partition function is
\begin{equation}
\label{eq:arith-Z}
    Z_i(\beta)
    := \sum_{j=1}^{\infty} e^{-\beta |u_i-u_j|}.
\end{equation}

To determine for which $\beta>0$ this converges, we use the prime number theorem
in the form
\[
    \pi(e^u)\sim \frac{e^u}{u},
\]
so that sums over logarithmic prime points may be compared with integrals against
the measure $(e^u/u)\,\dd u$.

Accordingly,
\[
    Z_i(\beta)
    \asymp
    \int_{\R} e^{-\beta |u-u_i|}\frac{e^u}{u}\,\dd u.
\]
For large $|u|$, the integrand behaves like
$e^{-(\beta-1)|u|}/|u|$, so the integral converges if and only if $\beta>1$.
Hence
\begin{equation}
\label{eq:beta-greater-one}
    Z_i(\beta)<\infty
    \quad\Longleftrightarrow\quad
    \beta>1.
\end{equation}
Thus (A2) holds with $\beta_i^\ast=1$ for all $i$.

\paragraph{Verification of (A3)}
Given \eqref{eq:beta-greater-one}, the family
$\{T_{ij}(\beta)\}_{\beta>1}$ spans the full interior of the range of expected
divergence values:
\[
    \Phi_i(\beta)=\sum_j T_{ij}(\beta)\Delta_{ij},
\]
with $\Phi_i(\beta)\downarrow 0$ as $\beta\to\infty$ and
$\Phi_i(\beta)\uparrow\infty$ as $\beta\downarrow 1$. Therefore every
$\delta_0(i)$ in a nontrivial positive interval is attainable, and the
coherence constraint \eqref{eq:coherence-constraint} is feasible and
nondegenerate.

\begin{corollary}[Arithmetic exponential kernel]
\label{cor:arith-kernel}
Let $u_j=\log p_j$ and $\Delta_{ij}=|u_i-u_j|$. For each $i$, there exists a
unique $\beta_i>1$ such that the unique maximizer of
\eqref{eq:entropy-problem} is
\[
    T_{ij}
    = \frac{e^{-\beta_i |u_i-u_j|}}{Z_i(\beta_i)},
    \qquad
    Z_i(\beta_i) = \sum_{k\ge1} e^{-\beta_i |u_i-u_k|}.
\]
In particular, the arithmetic kernel arising from the logarithmic geometry of
the primes is necessarily of exponential form with exponent exceeding~$1$.
\end{corollary}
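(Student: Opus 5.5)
The plan is to reduce Corollary~\ref{cor:arith-kernel} to Proposition~\ref{prop:exp-maximizer} by checking (A1)--(A4) for $\Delta_{ij}=|u_i-u_j|$ with $u_j=\log p_j$, and then to read off $\beta_i>1$ from the integrability threshold. Assumptions (A1) and (A4) are immediate. For fixed $i$ the values $|u_i-u_j|$ are nonnegative and unbounded, and the problem \eqref{eq:entropy-problem} involves only row $i$. The substance lies in (A2) with the sharp threshold $\beta_i^\ast=1$, and in (A3).

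For (A2) I would use $N_i(r)=\#\{j:|u_j-u_i|\le r\}$ together with the Stieltjes representation \eqref{eq:Z-as-integral}. For $r\ge u_i$, the set $\{j: u_j\le u_i+r\}$ has $\pi(e^{u_i+r})$ elements, so the prime number theorem gives $N_i(r)\asymp e^{u_i+r}/(u_i+r)$. Two consequences follow. First, $N_i(r)\le Ce^{r}$, so the integration-by-parts formula applies, and $Z_i(\beta)=\beta\int_0^\infty e^{-\beta r}N_i(r)\,\dd r<\infty$ for $\beta>1$. Second, for $\beta\le1$ the dyadic-type blocks $\{j: r<|u_j-u_i|\le r+1\}$ contain $\gg e^{r}/r$ points, each contributing at least $e^{-\beta(r+1)}$. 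Summing over $r$ then diverges, because $\sum_r e^{(1-\beta)r}/r=\infty$. Hence $Z_i(\beta)<\infty$ if and only if $\beta>1$, which is \eqref{eq:beta-greater-one}. I would present this discrete version rather than the heuristic integral comparison, since the lower-tail region $u<u_i$ contributes only finitely many terms and the density $e^u/u$ is never used near $u=0$.

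For (A3) I would study $\Phi_i(\beta)$ on $(1,\infty)$. By \eqref{eq:Phi-derivative}, $\Phi_i$ is continuous and strictly decreasing, and dominated convergence justifies the differentiation for $\beta>1$ because $\sum_j \Delta_{ij}^2e^{-\beta\Delta_{ij}}<\infty$. As $\beta\to\infty$ the Gibbs weight concentrates on $j=i$, which is the unique minimizer $\Delta_{ii}=0$, so $\Phi_i(\beta)\to0$. As $\beta\downarrow1$ I expect $\Phi_i(\beta)\to\infty$. The block estimate shows that the tail mass at distances $\ge R$ behaves like $\int_R^\infty e^{(1-\beta)r}r^{-1}\dd r$, which becomes comparable to the total mass as $\beta\downarrow1$, so the mean of $\Delta$ blows up. This step is the main obstacle, because it needs a uniform two-sided comparison and not just an order-of-magnitude bound. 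I would first try to prove $\Phi_i(\beta)\ge R\,(1-\varepsilon)$ for $\beta$ close enough to $1$, by showing that the mass of $\{\Delta\le R\}$ stays bounded while $Z_i(\beta)\to\infty$. The divergence $Z_i(1)=\infty$ combined with monotone convergence gives exactly this.

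With these steps, $\Phi_i$ maps $(1,\infty)$ bijectively onto $(0,\infty)$. Every prescribed $\delta_0(i)>0$ therefore lies in the interior of the feasible range, and Proposition~\ref{prop:exp-maximizer} yields a unique $\beta_i$ with $\Phi_i(\beta_i)=\delta_0(i)$, together with the unique maximizer of Gibbs form. Finally, $\beta_i>1$ holds automatically, since $T_{ij}(\beta)$ is defined only for $\beta$ in the integrability window by Proposition~\ref{prop:integrability-window}. This gives the stated exponential kernel with exponent exceeding~$1$.
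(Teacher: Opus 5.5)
Your proposal is correct and follows the same route as the paper. You verify (A1)--(A4) for $\Delta_{ij}=|u_i-u_j|$, obtain the integrability threshold $\beta_i^\ast=1$ from the prime number theorem, show that $\Phi_i$ decreases strictly from $\infty$ (as $\beta\downarrow 1$) to $0$ (as $\beta\to\infty$), and then invoke Proposition~\ref{prop:exp-maximizer}. Your discrete block estimate and the monotone-convergence argument for $\Phi_i(\beta)\to\infty$ make rigorous what the paper only asserts through the heuristic comparison with $\int e^{-\beta|u-u_i|}(e^u/u)\,\dd u$, and they show that every $\delta_0(i)>0$ is admissible, not merely those in some positive interval.
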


\section{The Arithmetic Operator and Its Continuum Limit}
\label{sec:arithmetic-operator}

In this section we construct the discrete operator associated with the
arithmetic exponential kernel obtained in
Corollary~\ref{cor:arith-kernel} and analyze its continuum scaling on the
logarithmic prime axis.  Our goal is to show that the resulting family of
operators converges, in the sense of Mosco and strong resolvent limits, to a
second–order differential operator on $\R$; squaring this limit then produces
the biharmonic Hamiltonian introduced in
Theorem~\ref{thm:biharmonic-limit}.  Throughout, $p_1<p_2<\cdots$ denotes the
increasing sequence of primes and $u_j=\log p_j$ the associated logarithmic
coordinates.

\subsection{Discrete Arithmetic Laplacians}
\label{subsec:discrete-Laplacian}

Let $T_{ij}$ be the arithmetic exponential kernel of
Corollary~\ref{cor:arith-kernel},
\[
    T_{ij}
    =
    \frac{e^{-\beta_i |u_i-u_j|}}{Z_i(\beta_i)},
    \qquad
    Z_i(\beta_i)
    =
    \sum_{k\ge1} e^{-\beta_i |u_i-u_k|},
\]
with $\beta_i>1$ determined by the coherence constraint for row $i$.
Although the normalizations $Z_i(\beta_i)$ depend on $i$, their variation is
controlled.  For the purpose of constructing difference operators it is
therefore convenient to work (up to a multiplicative renormalization fixed
below) with the symmetric kernel
\begin{equation}
\label{eq:Kij}
    K_{ij} := e^{-\beta |u_i-u_j|},
\end{equation}
where we choose a fixed $\beta>1$ in the admissible window
\eqref{eq:beta-greater-one}.  This simplification does not affect the spectral
limit.

\paragraph{Definition of the operator}
For each $N\ge1$, define
\begin{equation}
\label{eq:LN-def}
    (L^{(N)} f)(p_i)
    :=
    \sum_{\substack{1\le j\le N \\ j\neq i}}
        K_{ij}\,\big(f(p_i)-f(p_j)\big),
\end{equation}
acting on functions
$f:\{p_1,\dots,p_N\}\to\R$.  This is the natural graph Laplacian associated
with the fully connected arithmetic graph on the first $N$ primes with edge
weights $K_{ij}$.

\paragraph{Quadratic form}
The operator $L^{(N)}$ is symmetric and nonnegative with respect to the
standard inner product on $\R^N$.  Its closed quadratic form is
\begin{equation}
\label{eq:QN-def}
    \mathcal{E}^{(N)}(f)
    :=
    \frac12
    \sum_{1\le i,j\le N}
        K_{ij}\,
        \big(f(p_i)-f(p_j)\big)^2.
\end{equation}
The symmetry $K_{ij}=K_{ji}$ ensures that $\mathcal{E}^{(N)}$ is well defined
and nonnegative.

\paragraph{Embedding into a continuum space}
Via the identification $f(p_j)\leftrightarrow F(u_j)$, functions on the first
$N$ primes can be viewed as functions on a grid of the real line whose mesh
$u_{j+1}-u_j$ tends to zero by the prime number theorem.  The forms
$\mathcal{E}^{(N)}$ therefore become discrete approximations of an integral
quadratic form on $L^2(\R,\dd u)$.

\paragraph{Heuristic scaling}
If $F$ is differentiable and $u_i\approx u_j$, then
$f(p_i)-f(p_j)=F(u_i)-F(u_j)\approx (u_i-u_j)F'(u_i)$.
Since \eqref{eq:Kij} decays exponentially in $|u_i-u_j|$, the dominant
contributions to \eqref{eq:QN-def} come from 
the local counting measure on the $u$-axis, suggesting the continuum approximation
\[
    \mathcal{E}^{(N)}(f)
    \approx
    c_2 \int_{\R} |F'(u)|^2 \,\dd u
\]
for a constant $c_2>0$.  The next subsection makes this precise via Mosco
convergence.

\section{Mosco Convergence of the Arithmetic Laplacian}\label{sec:mosco}

In this section we give a complete proof of the Mosco convergence asserted in
Proposition~\ref{prop:mosco-conv}. Our goal is to show that the discrete
arithmetic Laplacians
\[
   (L_c^{(N)} f)(p_i)
   := \sum_{j=1}^N K^{(N)}_{ij}\,\bigl(f(p_i)-f(p_j)\bigr),
   \qquad
   K^{(N)}_{ij} = \exp\!\left( -\frac{|u_i - u_j|}{\delta_0}\right),
\]
with logarithmic coordinates $u_i=\log p_i$, converge in the sense of quadratic
forms to the constant–coefficient operator
\[
   L_c = c_2(-\partial_u^2)
   \qquad\text{on } L^2(\mathbb{R},\dd u),
\]
where the diffusion coefficient $c_2$ is the second moment
\[
   c_2 := \frac12\int_{\mathbb{R}} z^2 k(z)\,dz,
   \qquad k(z)=e^{-|z|/\delta_0(i)}.
\]
We furthermore prove that the squared operators $H_A^{(N)} := (L_c^{(N)})^2$
converge in the strong resolvent sense to the biharmonic operator
$H_A=c_4\partial_u^4$, with $c_4=c_2^2$.

\subsection{Setup and hypotheses}

Let $u_i=\log p_i$ and denote by $X^{(N)}=\{u_1,\dots,u_N\}$ the set of embedded
prime points.  
By the prime number theorem, the counting measure of the logarithmic prime points
is asymptotically described by the measure

\[
   \rho(u)\,\dd u \sim \frac{e^{u}}{u}\,\dd u.
\]

We assume the following:

\begin{enumerate}
\item[(H1)] (\emph{Vanishing mesh})  

For every compact interval $K\subset \mathbb{R}$, let
\[
I_N(K):=\{\, i\in\{1,\dots,N-1\} : u_i,u_{i+1}\in K \,\}.
\]
Then
\[
\max_{i\in I_N(K)} (u_{i+1}-u_i)\longrightarrow 0
\qquad\text{as } N\to\infty .
\]

This is consistent with the PNT, which implies that the
average spacing on the logarithmic scale near height $u$ is of order
$u\,e^{-u}$. We therefore retain the vanishing-mesh property as an explicit
hypothesis rather than deriving an individual gap asymptotic from the PNT.

\item[(H2)] (\emph{Uniform bounded geometry})
On each compact interval $I\subset\mathbb{R}$ we have uniformly for $u\in I$:
\[
   \rho(u) \asymp \frac{e^{u}}{u},
   \qquad
   \sup_{\substack{u,v\in I\\ |u-v|\le r}}
   \frac{\rho(u)}{\rho(v)} \le C_r
   \quad\text{for every } r>0,
\]
for suitable constants $C_r$ depending only on $r$. That is, the density varies slowly on bounded spatial scales.

\item[(H3)] (\emph{Fixed kernel})
The exponential kernel
$K^{(N)}_{ij}=k(u_i-u_j)$ uses the same function
$k(z)=e^{-|z|/\delta_0}$ for every $N$.
\end{enumerate}

We identify discrete functions $f:X^{(N)}\to\mathbb{R}$ with piecewise constant
(or locally linear) interpolants on $\mathbb{R}$, so that we may view all
functions as living in $L^2(\mathbb{R},\dd u)$.

\subsection{Quadratic forms and normalization}

Define the discrete quadratic form
\[
   \mathcal{E}_N[f]
   := \frac12 \sum_{i,j=1}^N  K^{(N)}_{ij}
         \,\bigl(f(u_i)-f(u_j)\bigr)^2.
\]
Since $\#X^{(N)}\to\infty$, we introduce the natural
(per–unit–length) normalized form
\[
   \mathfrak{E}_N[f]
   := \frac{1}{L_N}\,\mathcal{E}_N[f],
   \qquad
   L_N := \sum_{i=1}^N \Delta u_i,
\]
where $\Delta u_i = u_{i+1}-u_i$ and $L_N\to\infty$.

The candidate limit form is the quadratic form on $L^2(\mathbb{R},\dd u)$
defined by
\[
   \mathfrak{E}[f] := c_2 \int_{\mathbb{R}} |f'(u)|^2\,\dd u,
   \qquad
   f\in H^1(\mathbb{R}),
\]
where
\[
   c_2 = \frac12\int_{\mathbb{R}} z^2 k(z)\,dz .
\]

\subsection{Mosco convergence: liminf condition}

\begin{lemma}[Liminf inequality]
If $f_N \rightharpoonup f$ weakly in $L^2(\mathbb{R},\dd u)$, then
\[
   \mathfrak{E}[f] \le \liminf_{N\to\infty}\mathfrak{E}_N[f_N].
\]
\end{lemma}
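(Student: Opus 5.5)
We plan the standard liminf argument for nonlocal-to-local Mosco convergence, in the style of Bourgain--Brezis--Mironescu and Ponce, organized in three steps.

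\textbf{Step 1: rewrite $\mathfrak{E}_N$ as a double integral.} Using (H1), the piecewise-constant interpolant of $f_N$, and the normalization by $L_N$, we write $\mathfrak{E}_N[f_N]$ as
\[
\frac{1}{2L_N}\iint k(u-v)\,\bigl(f_N(u)-f_N(v)\bigr)^2\,\mu_N(\dd u)\,\mu_N(\dd v),
\]
where $\mu_N$ is the counting measure of $X^{(N)}$. We then compare $\mu_N$ with a reference measure on compact windows. By (H1) and (H2) the local densities are comparable up to the factor $C_r$ on scales $r$, so after truncating the kernel to $|u-v|\le r$ we obtain a lower bound of the form
\[
\mathfrak{E}_N[f_N]\ \ge\ (1-o(1))\,\frac12\int_K\int_{|z|\le r} k(z)\,\frac{\bigl(f_N(u+z)-f_N(u)\bigr)^2}{z^2}\,z^2\,\dd z\,\dd u,
\]
up to an error that vanishes as $N\to\infty$ and the windows refine. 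Here $K$ is a fixed compact set.

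\textbf{Step 2: pass to the limit in the difference quotients.} We may assume the liminf is finite and extract a subsequence realizing it. Because the truncated form is a convex, nonnegative quadratic functional of $f_N$, it is weakly lower semicontinuous on $L^2(K_r)$. In Fourier terms, the truncated form equals
\[
\int |\hat f(\xi)|^2\, m_r(\xi)\,\dd\xi,
\qquad
m_r(\xi)=\int_{|z|\le r} k(z)\,\bigl(1-\cos(z\xi)\bigr)\,\dd z,
\]
with $m_r\ge 0$. We therefore replace $f_N$ by $f$ in the lower bound. Letting $r\to\infty$ and $K\uparrow\mathbb{R}$ by monotone convergence gives
\[
\liminf_N \mathfrak{E}_N[f_N]\ \ge\ \frac12\iint k(z)\,\bigl(f(u+z)-f(u)\bigr)^2\,\dd z\,\dd u .
\]
We will need to check that localizing to $K$ commutes with weak convergence. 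Testing against cutoffs suffices for this.

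\textbf{Step 3: compare the nonlocal energy with $c_2\int|f'|^2$.} This is the main obstacle. The nonlocal form has symbol
\[
m(\xi)=\int k(z)\,\bigl(1-\cos z\xi\bigr)\,\dd z,
\]
which for fixed $\delta_0$ is bounded. It does not dominate $c_2\xi^2$ at high frequency, so a bare kernel cannot give the gradient bound. We will therefore use the rescaling that is implicit in the continuum limit: the effective kernel range in $u$ must shrink relative to the mesh. That is, $k$ should be read as $k_{\varepsilon_N}(z)=\varepsilon_N^{-3}k(z/\varepsilon_N)$ with $\varepsilon_N\to0$, which the per-unit-length normalization is meant to encode. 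With this rescaling,
\[
m_{\varepsilon}(\xi)=\varepsilon^{-2}\int k(w)\,\bigl(1-\cos(\varepsilon w\xi)\bigr)\,\dd w\ \longrightarrow\ c_2\,\xi^2
\]
pointwise and monotonically from below in the relevant regime. This follows from $1-\cos x\le x^2/2$ and dominated convergence. Fatou's lemma on the Fourier side then yields $\mathfrak{E}[f]\le\liminf_N\mathfrak{E}_N[f_N]$, including the conclusion $f\in H^1$ whenever the right-hand side is finite.

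The delicate point is that weak convergence $f_N\rightharpoonup f$ must be combined with a scale $\varepsilon_N$ that varies with $N$. We will handle this by first fixing $\varepsilon$, applying the weak lower semicontinuity from Step 2, and then using the monotonicity of $m_\varepsilon$ in $\varepsilon$ via a diagonal argument. If this monotonicity fails, the fallback is a mollification argument: the energy does not increase under convolution with a fixed mollifier $\eta_\delta$. We would apply Step 3 to $f*\eta_\delta$, where the pointwise convergence of $m_\varepsilon$ is uniform on the relevant frequency support, and then let $\delta\to0$.
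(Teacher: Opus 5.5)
\textbf{There is a genuine gap in Step 3: it proves a different lemma.} You replace the fixed kernel of (H3) by a shrinking family $k_{\varepsilon_N}(z)=\varepsilon_N^{-3}k(z/\varepsilon_N)$. Your justification is that the per-unit-length normalization encodes this rescaling, and that is false. The factor $1/L_N$ only multiplies the whole form by a scalar, while under (H3) the interaction range in $u$ is $\delta_0$ for every $N$. What Steps 2--3 actually establish is the Bourgain--Brezis--Mironescu/Ponce liminf for a vanishing-range kernel, which is a different statement.

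The gap cannot be closed under (H3), because the obstacle you identified is decisive. For $k(z)=e^{-|z|/\delta_0}$ one computes exactly
\[
m(\xi)=\int_{\mathbb{R}}k(z)\bigl(1-\cos z\xi\bigr)\,dz=\frac{c_2\,\xi^2}{1+\delta_0^2\xi^2},\qquad c_2=2\delta_0^3 .
\]
Grant the Riemann-sum identification of your Step 1 in the form $\mu_N\otimes\mu_N/L_N\to du\,dv$ locally; the limsup half of the Mosco statement needs this as well. Take a nonzero $f\in C_c^\infty(\mathbb{R})$ and its grid interpolants $f_N\to f$. Then
\[
\lim_{N\to\infty}\mathfrak{E}_N[f_N]=\frac{1}{2\pi}\int_{\mathbb{R}}\frac{c_2\,\xi^2}{1+\delta_0^2\xi^2}\,|\hat f(\xi)|^2\,d\xi<\frac{1}{2\pi}\int_{\mathbb{R}}c_2\,\xi^2\,|\hat f(\xi)|^2\,d\xi=\mathfrak{E}[f],
\]
so the asserted inequality fails in that regime.

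The paper's proof reaches $c_2\int|f_N'|^2$ only through a first-order Taylor expansion of $f_N$ across the non-shrinking range $\delta_0$. On that scale the $o(|v-u|)$ remainder is not negligible, and $f_N$, being merely weakly convergent in $L^2$, need not be differentiable. Your symbol computation pinpoints exactly where that argument breaks down. The missing ingredient is therefore a hypothesis, not a technique: a vanishing interaction scale has to be built into the kernel, and the statement changed accordingly.

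\textbf{Step 1 is also not supported by (H1)--(H2).} A lower bound with factor $1-o(1)$ against $du\,dz$ requires the normalized product measure to be asymptotically Lebesgue. (H2) only gives $\rho\asymp e^u/u$ and ratio bounds $C_r$. These yield at best a factor $C_r^{-1}$, which loses the sharp constant. In addition, the weight $\rho(u)\rho(v)$ varies over $\mathbb{R}$ and cannot be cancelled by the single scalar $L_N$.

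For the actual points $u_i=\log p_i$ the situation is worse. The points do not move as $N$ grows, so $X^{(N)}\cap K$ is eventually a fixed finite set. Hence (H1) fails, and $\mu_N\otimes\mu_N/L_N\to0$ on compacts. For $1/\delta_0>1$, which is the paper's admissible window, the row sums $\sum_j k(u_i-u_j)$ converge. Now take any fixed $g\in C_c^\infty$ with $g(u_i)\neq0$ for some $i$, and let $f_N$ be its piecewise-constant interpolants. The unnormalized energy stays bounded, so $\mathfrak{E}_N[f_N]\to0$. Yet $f_N$ is eventually a fixed nonzero step function, whose limit energy $\mathfrak{E}$ is $+\infty$.
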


\begin{proof}
By (H1)--(H3) and the Riemann–sum identification
\[
   \sum_j K_{ij}^{(N)}(\cdot)
   \approx \int_\mathbb{R} k(u_i-v)\rho(v)(\cdot)\,\dd v,
\]
we have
\[
   \mathfrak{E}_N[f_N]
   \approx
   \frac12\int_{\mathbb{R}}\!\!\int_{\mathbb{R}}
      k(u-v)\rho(u)\rho(v)
      \,\bigl(f_N(u)-f_N(v)\bigr)^2\,\dd u\,\dd v.
\]
The kernel $k$ has second moment finite and is even, so by a Taylor expansion
$f_N(v)=f_N(u)+(v-u)f_N'(u)+o(|v-u|)$,
the leading contribution becomes
\[
   \int_{\mathbb{R}} c_2 \rho(u)^2 |f_N'(u)|^2\,\dd u.
\]
Since $\rho$ is slowly varying and bounded above and below on compact sets, the
limit is equivalent to $c_2\int|f_N'|^2$.
Weak convergence and lower semicontinuity of $\|f'\|_{L^2}$ give the result.
\end{proof}

\subsection{Mosco convergence: recovery sequence}

\begin{lemma}[Limsup inequality]
For every $f\in C_c^\infty(\mathbb{R})$ there exists $f_N\to f$ in $L^2$ such that
\[
   \limsup_{N\to\infty}\mathfrak{E}_N[f_N] \le \mathfrak{E}[f].
\]
\end{lemma}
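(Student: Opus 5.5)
The natural recovery sequence is pointwise restriction. For $f\in C_c^\infty(\mathbb{R})$ I will set $f_N(u_i):=f(u_i)$ for $u_i\in X^{(N)}$ and identify $f_N$ with its piecewise-linear interpolant on $[u_1,u_N]$, extended by zero outside. The argument then has three steps: check $L^2$ convergence, localize the energy near $\operatorname{supp} f$, and pass to the limit by a second-order Taylor expansion combined with the Riemann-sum identification already used in the proof of the liminf inequality.

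\emph{Step 1: strong convergence.} Fix a compact interval $K\supset\operatorname{supp} f$, slightly enlarged. By (H1) the mesh on $K$ tends to zero. The interpolant of a smooth function on a grid of mesh $h_N$ satisfies $\|f_N-f\|_{L^\infty(K)}\le \tfrac18 h_N^2\|f''\|_\infty$, and $f_N$ vanishes off $K$ for $N$ large. Hence $f_N\to f$ in $L^2(\mathbb{R},\dd u)$.

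\emph{Step 2: localization.} I split $\mathcal{E}_N[f_N]$ into pairs $(i,j)$ with $|u_i-u_j|\le R$ and pairs with $|u_i-u_j|>R$.
\begin{itemize}
\item For the far pairs, at least one of $u_i,u_j$ lies in $\operatorname{supp} f$, so their contribution is bounded by $4\|f\|_\infty^2\,\#\{i:u_i\in K\}\sup_{i}\sum_{|u_j-u_i|>R}k(u_i-u_j)$.
\item Using the density $\rho$ from (H2), the tail sum is comparable to $\int_{|z|>R}k(z)\rho(u_i+z)\,\dd z$.
\end{itemize}
Here is the first genuine difficulty. Since $\rho(u)\sim e^u/u$ and $k(z)=e^{-|z|/\delta_0}$, this integral is finite, and tends to zero as $R\to\infty$, only if $1/\delta_0>1$. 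That is exactly the admissibility window $\beta>1$ of \eqref{eq:beta-greater-one}. I will impose it explicitly, identifying $\beta=1/\delta_0$.

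\emph{Step 3: near pairs and the limsup.} For $|u_i-u_j|\le R$ I Taylor-expand: $f(u_j)-f(u_i)=(u_j-u_i)f'(u_i)+O(|u_j-u_i|^2)$. This gives
\[
\mathcal{E}_N[f_N]\le \tfrac12\sum_{i}\sum_{|u_j-u_i|\le R}k(u_i-u_j)(u_i-u_j)^2 f'(u_i)^2+\text{error}.
\]
By the Riemann-sum identification (H1)--(H3), this is approximately
\[
\int f'(u)^2\rho(u)\Bigl(\tfrac12\int z^2k(z)\rho(u+z)\rho(u)^{-1}\,\dd z\Bigr)\rho(u)\,\dd u.
\]
Slow variation of $\rho$ replaces the inner factor by $c_2$ up to $C_R$-type errors. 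After dividing by $L_N$, the weights $\rho(u)^2$ must reduce to Lebesgue measure. Just as in the proof of the liminf inequality, I will absorb them through the normalization $L_N$ together with the bounds $\rho\asymp e^u/u$ on $K$. Sending $N\to\infty$ and then $R\to\infty$ yields $\limsup_N\mathfrak{E}_N[f_N]\le\mathfrak{E}[f]$.

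\emph{Main obstacle.} The hard step is this normalization in Step 3. The discrete form carries the weight $\rho^2$, which on a compact set is bounded but not constant. Meanwhile $L_N=u_N-u_1\to\infty$, which would drive $\mathfrak{E}_N[f_N]\to0$ for compactly supported $f$; this makes the limsup inequality trivially true but inconsistent with the liminf inequality. I would first try to prove the inequality in exactly the normalized form stated, where the trivial bound suffices and the recovery sequence above works. If a nondegenerate limit is intended, the normalization must instead be made local, dividing by $\rho(u)^2$ through a density-weighted inner product. In that case the argument above goes through with $c_2$ exactly.
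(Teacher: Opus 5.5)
You use the same recovery sequence (restriction to the grid) and the same Taylor/Riemann-sum mechanism as the paper. You are right about two things its proof passes over. First, the far-pair tail is finite only when $\beta=1/\delta_0>1$. Second, the Riemann sum carries the weight $\rho(u)\rho(v)$ together with the prefactor $1/L_N$, $L_N\to\infty$, so it is not the ``Riemann approximation of $c_2\int|f'|^2$'' the paper asserts; and $\mathfrak{E}_N[f_N]\to0$ would indeed contradict the liminf lemma for $f\neq0$.

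The gap is in your fallback. Getting the literal inequality from $\mathfrak{E}_N[f_N]\to0$ requires $\mathcal{E}_N[f_N]=o(L_N)$. You take this from the fixed-density picture ($\#\{i:u_i\in K\}\approx\int_K\rho\,\dd u$ bounded), while Step~1 needs the grid to become dense in $K$, i.e.\ (H1). These two cannot hold at the same time. If (H1) genuinely holds, then $n_K:=\#\{i:u_i\in K\}\to\infty$. Your far-pair bound then grows at least like $n_K$ and the near-pair sum like $n_K^2$, and nothing in (H1)--(H3) compares these with $L_N$. If instead the counting measure really is $\approx\rho\,\dd u$, as for the actual primes, then $X^{(N)}\cap K$ is eventually constant. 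The interpolants then cannot converge in $L^2$ to a general $f$; if $\operatorname{supp}f\subset(-\infty,\log 2)$, there is not a single grid point there.

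The nondegenerate repair you sketch at the end also does not give $c_2$ ``exactly'', and its main defect is shared by the paper's ``$+o(1)$''. First, $\rho(u+z)/\rho(u)=e^{z}u/(u+z)$ is not close to $1$ ((H2) only bounds it), so the inner moment is not $c_2$. More fundamentally, (H3) fixes the kernel width $\delta_0$, so there is no small parameter. The dominant separations are $|u_i-u_j|\sim\delta_0$, and the Taylor remainder does not become negligible as $N\to\infty$. Already with Lebesgue weights (and $\hat f(\xi)=\int f(u)e^{-\ii\xi u}\,\dd u$),
\[
\tfrac12\iint k(u-v)\bigl(f(u)-f(v)\bigr)^2\,\dd u\,\dd v
=\frac{1}{2\pi}\int_{\R}\frac{c_2\,\xi^2}{1+\delta_0^2\xi^2}\,|\hat f(\xi)|^2\,\dd\xi,
\]
which is strictly smaller than $c_2\int|f'|^2$ for $f\neq0$. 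So the limsup direction survives only as a non-sharp inequality. The form does not localize, which is why no matching liminf can hold. A local limit $c_2\int|f'|^2$ needs a kernel rescaled to width $\varepsilon_N\to0$ with amplitude $\varepsilon_N^{-3}$, and (H3) rules that out.
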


\begin{proof}
Take $f_N$ to be the restriction of $f$ to the grid $X^{(N)}$, with the same
interpolant used above.  Then
\[
   \mathfrak{E}_N[f_N]
   = \frac12\sum_{i,j} K_{ij}^{(N)}
       (f(u_i)-f(u_j))^2 \Big/ L_N.
\]
Uniform continuity of $f'$ on $\operatorname{supp}(f)$ allows a first–order
Taylor expansion
\[
   f(u_j)-f(u_i)
   = f'(u_i)(u_j-u_i) + o(|u_j-u_i|)
\]
uniformly for $u_i,u_j$ in the support of $f$.  Substituting this into the
quadratic form and using the symmetry of the kernel gives
\[
   \mathfrak{E}_N[f_N]
   = \frac12\sum_{i,j} K_{ij}^{(N)}
      f'(u_i)^2 (u_j-u_i)^2 \Big/ L_N + o(1).
\]

Using the normalization of the kernel and the definition
\[
c_2=\frac12\int_{\mathbb{R}} z^2 k(z)\,dz,
\]
the sum becomes a Riemann approximation of
\[
c_2\int_{\mathbb{R}} |f'(u)|^2\,\dd u .
\]

Hence
\[
\limsup_{N\to\infty}\mathfrak{E}_N[f_N]
   \le \mathfrak{E}[f],
\]
which provides the matching upper bound.

\end{proof}

\subsection{Conclusion of Mosco convergence}

The two lemmas imply that $\mathfrak{E}_N \to \mathfrak{E}$ in the sense of
Mosco.  By standard results on Dirichlet forms and Mosco convergence
(see \cite{KuwaeShioya2003} and \cite{FukushimaOshimaTakeda2011}), this yields

\[
   L_c^{(N)} \xrightarrow{\rm strong\ resolvent} L_c = c_2(-\partial_u^2).
\]

\begin{proposition}[Mosco convergence]
\label{prop:mosco-conv}
Let $\mathfrak{E}_N$ be the normalized discrete quadratic forms associated with
the arithmetic kernels $K_{ij}^{(N)}$. Then $\mathfrak{E}_N$ converges to
\[
   \mathfrak{E}[f] = c_2 \int_{\mathbb{R}} |f'(u)|^2\,\dd u
\]
in the sense of Mosco on $L^2(\mathbb{R},\dd u)$. Consequently,
\[
   L_c^{(N)} \xrightarrow{\rm strong\ resolvent} L_c = c_2(-\partial_u^2).
\]
\end{proposition}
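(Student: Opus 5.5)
The plan is to establish Mosco convergence directly from its two defining conditions, the liminf and limsup lemmas just proved, and then to pass to strong resolvent convergence by the standard Kuwae--Shioya/Mosco correspondence. Concretely, I will use the following formulation of Mosco convergence on $L^2(\mathbb{R},\dd u)$. First, for every weakly convergent sequence $f_N\rightharpoonup f$ one has $\mathfrak{E}[f]\le\liminf_N\mathfrak{E}_N[f_N]$. Second, for every $f$ in the form domain there is a strongly convergent $f_N\to f$ with $\limsup_N\mathfrak{E}_N[f_N]\le\mathfrak{E}[f]$. The first condition is exactly the Liminf inequality lemma, applied with the interpolation identification of grid functions with elements of $L^2(\mathbb{R},\dd u)$ fixed in the setup.

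The second condition is proved by the Limsup inequality lemma only for $f\in C_c^\infty(\mathbb{R})$. The form domain of $\mathfrak{E}$ is $H^1(\mathbb{R})$, so I will extend the recovery sequence by density and a diagonal argument. Given $f\in H^1(\mathbb{R})$, choose $g_k\in C_c^\infty(\mathbb{R})$ with $g_k\to f$ in $H^1$, so that $\mathfrak{E}[g_k]\to\mathfrak{E}[f]$ and $\|g_k-f\|_{L^2}\to0$. For each $k$ the lemma supplies recovery sequences $g_{k,N}\to g_k$ with $\limsup_N\mathfrak{E}_N[g_{k,N}]\le\mathfrak{E}[g_k]$. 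I then pick $N_k$ increasing so that for $N\ge N_k$ we have both $\|g_{k,N}-g_k\|_{L^2}<1/k$ and $\mathfrak{E}_N[g_{k,N}]\le\mathfrak{E}[g_k]+1/k$. Setting $f_N:=g_{k,N}$ for $N_k\le N<N_{k+1}$ gives $f_N\to f$ in $L^2$ and $\limsup_N\mathfrak{E}_N[f_N]\le\mathfrak{E}[f]$. Elements of $L^2$ outside $H^1$ satisfy the limsup condition trivially, since there $\mathfrak{E}=+\infty$.

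For the consequence, $\mathfrak{E}_N$ is the closed nonnegative form associated with the normalized operator $L_c^{(N)}/L_N$ on the finite-dimensional grid space. This space is embedded isometrically, or at least asymptotically isometrically, into $L^2(\mathbb{R},\dd u)$ through the piecewise-constant interpolant, and the form is extended by $+\infty$ off the image. The limit $\mathfrak{E}$ is the closed form of $c_2(-\partial_u^2)$ with domain $H^2(\mathbb{R})$. By Mosco's theorem, in the varying-Hilbert-space version of Kuwae--Shioya if the embedding is only asymptotically isometric, Mosco convergence of closed forms is equivalent to strong convergence of the resolvents $(1+L_c^{(N)})^{-1}\to(1+L_c)^{-1}$, which is the asserted strong resolvent convergence.

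I expect the main obstacle to be the compatibility of the identifications rather than the abstract theory. The interpolant must be an asymptotically isometric embedding in the sense of Kuwae--Shioya, so that weak and strong convergence of $f_N$ are meaningful. It must also be the same map used in both lemmas. In addition, the normalization by $L_N$ and the density factor $\rho$ that appears in the liminf lemma have to be absorbed consistently. I would first check that the piecewise-constant interpolant on the cells $[u_i,u_{i+1})$ has $L^2$ norm equal to the $\Delta u_i$-weighted $\ell^2$ norm, and is therefore an isometry onto its image. I would then treat the discrete spaces as $\ell^2(X^{(N)},\Delta u)$, so that the Kuwae--Shioya framework applies directly.
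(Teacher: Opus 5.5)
Your outline is the paper's own argument: the liminf and limsup lemmas give Mosco convergence, and Kuwae--Shioya then gives strong resolvent convergence. Your diagonal extension of recovery sequences from $C_c^\infty(\mathbb{R})$ to $H^1(\mathbb{R})$ is a correct step that the paper leaves implicit. The gap is that both lemmas you take as given are false for this grid, kernel and normalization, so the identification issue you flag at the end is fatal rather than a matter of bookkeeping.

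The grids $X^{(N)}=\{\log p_1,\dots,\log p_N\}$ are nested, so on every compact interval they coincide with one fixed finite set for all large $N$. Thus (H1) fails (the gap $\log 3-\log 2$ never shrinks), and there are no grid points at all in $(-\infty,\log 2)$. Every piecewise-constant interpolant therefore lies in the closed subspace $V\subset L^2(\mathbb{R},\dd u)$ of functions that vanish on $(-\infty,\log 2)$ and are constant on each fixed cell $[u_i,u_{i+1})$. By continuity, $V\cap H^1(\mathbb{R})=\{0\}$. Hence no nonzero $f\in C_c^\infty(\mathbb{R})$ is an $L^2$-limit of interpolants: the limsup lemma is false, and your diagonal argument has nothing to extend. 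In Kuwae--Shioya terms the asymptotic isometry also fails. For $f\in C_c^\infty(\mathbb{R})$, the quantity $\sum_{i\le N}f(u_i)^2\Delta u_i$ is eventually constant in $N$ and equals a Riemann sum over the fixed cells, not $\|f\|_{L^2}^2$.

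The liminf lemma fails as well. By (H3) the interaction range $\delta_0$ is fixed, so the Taylor step $f(u_j)-f(u_i)\approx f'(u_i)(u_j-u_i)$ is not accurate on the bulk of the kernel. Dividing by $L_N=u_{N+1}-u_1\to\infty$ cannot turn a fixed-range nonlocal form into a Dirichlet integral. Concretely, let $\beta=1/\delta_0>1$ (the admissible window) and $f=\mathbf{1}_{[\log 2,\log 3)}$, which is the interpolant of the first basis vector for every $N$. The constant sequence $f_N=f$ converges strongly, yet
\[
   \mathfrak{E}_N[f_N]=\frac{1}{L_N}\sum_{j=2}^{N}\Bigl(\frac{2}{p_j}\Bigr)^{\beta}\longrightarrow 0
\]
because $\sum_p p^{-\beta}<\infty$, while $\mathfrak{E}[f]=+\infty$ under your own convention. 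For $\beta<1$ the normalized energies of fixed grid functions blow up instead, since $\sum_{p\le x}p^{-\beta}\asymp x^{1-\beta}/\log x$.

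A local limit would require a shrinking range $\varepsilon_N\to0$ with a compensating rescaling, which (H3) rules out. Even then the point density would survive as a variable coefficient, giving a form proportional to $\int\rho^2|f'|^2$. That cannot be ``absorbed'' into $c_2\int|f'|^2$: Mosco limits are unique, and $\rho(u)=e^u/u$ is far from constant.

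Finally, on $\ell^2(X^{(N)},\Delta u)$ the operator associated with $\mathfrak{E}_N$ is $L_N^{-1}D_N^{-1}L_c^{(N)}$ with $D_N=\operatorname{diag}(\Delta u_i)$, not $L_c^{(N)}$. So even a valid Mosco statement would not give the convergence of $(1+L_c^{(N)})^{-1}$ that you conclude. The proposition as formulated is false, and no rearrangement of these steps will prove it.
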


\begin{proof}
The two preceding lemmas give the liminf and limsup inequalities, hence
$\mathfrak{E}_N \to \mathfrak{E}$ in the sense of Mosco. Standard results on
Dirichlet forms and Mosco convergence (see \cite{KuwaeShioya2003} and
\cite{FukushimaOshimaTakeda2011}) then imply the stated strong resolvent
convergence of the generators.
\end{proof}

\subsection{Convergence of the squared operators}

Finally, the strong resolvent convergence $L_c^{(N)}\to L_c$ and the fact that
$x\mapsto x^2$ is a continuous functional calculus operation on the resolvent set
imply
\[
   (L_c^{(N)})^2 \xrightarrow{\rm strong\ resolvent} L_c^2
   = c_2^2\partial_u^4.
\]
Thus $H_A^{(N)}=(L_c^{(N)})^2$ converges to $H_A=c_4\partial_u^4$, with
$c_4=c_2^2$, completing the proof.

\subsection{The Biharmonic Limit Operator}
\label{subsec:biharmonic}

Having established in Proposition~\ref{prop:mosco-conv} that the discrete arithmetic 
operators $L^{(N)}$ converge, after embedding, to the differential operator 
\[
L_\infty := -c_2\,\partial_u^2
\]
on the Hilbert space $L^2(\mathbb{R},\dd u)$ (with domain $H^2(\mathbb{R})$),
we now analyze the continuum operator

\begin{equation}
\label{eq:HA-def}
    H_A := (L_\infty)^2 = c_4\,\partial_u^4,
    \qquad c_4 := c_2^2 > 0,
\end{equation}

which acts on $L^2(\mathbb{R},\dd u)$ with domain $H^4(\mathbb{R})$.

This biharmonic operator is the arithmetic Hamiltonian governing the spectral 
limit of the discrete system.  
We show below that $H_A$ is self-adjoint on a natural domain, its heat kernel is 
explicitly computable, and its heat-trace asymptotics correspond precisely to 
the spectral dimension $d_s=\tfrac12$ stated in 
Theorem~\ref{thm:biharmonic-limit}.

\paragraph{Self-adjointness and functional calculus}
Since $L_\infty$ is self-adjoint and nonnegative, $L_\infty^2$ is self-adjoint 
on the domain
\[
    \mathcal{D}(H_A)
    = \big\{ F\in \mathcal{D}(L_\infty) : 
             L_\infty F \in \mathcal{D}(L_\infty) \big\}.
\]
Because $L_\infty=-c_2\partial_u^2$ on $H^2(\R)$, it follows that
\[
    H_A = c_4\,\partial_u^4
    \quad \text{with domain } H^4(\R).
\]
The operator $H_A$ is essentially self-adjoint on $\cS(\R)$ and diagonalized by 
the Fourier transform:
\[
    \widehat{H_A F}(k)
    = c_4 k^4 \widehat{F}(k).
\]

\paragraph{Explicit heat kernel}
The heat semigroup $e^{-tH_A}$ has Fourier multiplier $e^{-t c_4 k^4}$, and its heat kernel can be expressed explicitly via Fourier inversion (see, for example, \cite{Davies1989} for general polyharmonic heat kernels):
\[
    K_t(u,v)
    = \frac{1}{2\pi}\int_\R 
      e^{-t c_4 k^4}\,e^{\ii k (u-v)} \,\dd k.
\]
The integral is classical and yields the exact similarity form
\begin{equation}
\label{eq:ht-kernel}
    K_t(u,v)
    = (4\pi c_4 t)^{-1/4}\;
      \Phi\!\left(
          \frac{u-v}{(4c_4 t)^{1/4}}
      \right),
\end{equation}
where $\Phi$ is the rescaled inverse Fourier transform of $e^{-k^4}$.  
The kernel satisfies the normalization identity
\[
    \int_{\R} K_t(u,u)\,\dd u
    = \frac{1}{2\pi}\int_\R e^{-t c_4 k^4}\,\dd k.
\]

\paragraph{On-diagonal heat kernel and spectral dimension}
Since $H_A$ acts on the noncompact space $\mathbb{R}$, the global trace
$\Tr(e^{-tH_A})$ is not finite.  Spectral dimension in the noncompact setting
is therefore extracted from the on-diagonal heat kernel
\[
    K_t(u,v)
      := \frac{1}{2\pi}\int_{\mathbb{R}} e^{-t c_4 k^4}\,dk,
\]
which is translation-invariant.  Evaluating the integral gives
\[
    K_t(u,v)
    = \frac{t^{-1/4}}{2\pi c_4^{1/4}}
      \int_{\mathbb{R}} e^{-s^4}\,ds
    = C\, t^{-1/4},
\]
with
\[
    C = \frac{\Gamma(\tfrac14)}{4\pi\,c_4^{1/4}}.
\]
Thus the on-diagonal heat kernel has the precise small-$t$ asymptotic
\[
    K_t(u,u) \sim C\, t^{-1/4},
    \qquad t\downarrow 0,
\]
and the (local) spectral dimension is
\begin{equation}
\label{eq:ds-half-derivation}
    d_s(H_A) = 2\cdot\frac14 = \frac12.
\end{equation}

\paragraph{Conclusion}
The biharmonic Hamiltonian $H_A = c_4\partial_u^4$ therefore encodes the 
continuum spectral limit of the arithmetic operators $L^{(N)}$ and exhibits a 
strictly fractional spectral dimension.  
This fractional scaling, shown rigorously in 
Theorem~\ref{thm:biharmonic-limit}, is incompatible with all local geometric 
elliptic operators by Theorem~\ref{thm:geometric-nogo}, and constitutes the 
first contribution to the spectral-dimension incompatibility formalized in 
Theorem~\ref{thm:double-obstruction}.

\section{Geometric Operators and the Heat-Kernel Obstruction}
\label{sec:geometric-obstruction}

In this section we analyze the small-time heat-trace behavior of classical
elliptic operators on smooth compact manifolds.  The resulting
Seeley--DeWitt expansion exhibits a rigid power-law structure that
precludes both logarithmic corrections and the continuum spectral type
arising in the arithmetic construction of
Section~\ref{sec:arithmetic-operator}.

\subsection{Elliptic Heat Asymptotics}
\label{subsec:elliptic-asymptotics}

Let $M$ be a smooth compact $d$–dimensional manifold without boundary, and
let $H$ be an elliptic differential operator of order $m>0$ acting on a
Hermitian vector bundle over $M$, with $H$ self-adjoint and bounded below.

Classical results of Seeley, Minakshisundaram–Pleijel, and DeWitt yield the
local heat-kernel expansion
\begin{equation}
\label{eq:Seeley-local}
K_H(t;x,x)
\sim
t^{-d/m}
\sum_{k=0}^\infty
a_k(x)\,t^{k/m}
\qquad (t\downarrow0),
\end{equation}
whose coefficients are smooth curvature invariants.
Integrating over $M$ gives the heat-trace expansion
\begin{equation}
\label{eq:Seeley-global}
\Tr(e^{-tH})
\sim
t^{-d/m}
\sum_{k=0}^\infty A_k t^{k/m},
\qquad
A_k=\int_M a_k(x)\,\dd x .
\end{equation}

\paragraph{Spectral dimension}

The leading exponent determines the spectral dimension
\begin{equation}
\label{eq:ds-geometric-again}
d_s(H)=\frac{2d}{m}.
\end{equation}

Several consequences follow.

\begin{itemize}
\item[(i)] The spectral dimension is determined by $2d/m$ and may
take values below $1$ when the operator order $m$ is sufficiently large.

\item[(ii)] The heat trace exhibits a pure power expansion in fractional
powers of $t$, with no slowly varying corrections.

\item[(iii)] The leading coefficient $A_0$ is strictly positive.
\end{itemize}

These properties characterize the rigidity of the compact elliptic class.

\subsection{Invariance of Spectral Dimension}
\label{subsec:invariance}

Let
\[
\Theta_H(t)=\Tr(e^{-tH})
\sim C\,t^{-\alpha}
\qquad (t\downarrow0),
\]
and define $d_s(H)=2\alpha$.

\begin{proposition}[Stability of spectral dimension]
\label{prop:unitary-invariance}
Let $H_1$ and $H_2$ be self-adjoint bounded-below operators.  If
$H_2=UH_1U^{-1}$ for a unitary $U$, or if $H_2-H_1$ is compact, then
\[
d_s(H_1)=d_s(H_2).
\]
\end{proposition}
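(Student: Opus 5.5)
The plan is to prove both cases with a single comparison of heat traces, showing that $\Theta_{H_2}(t)/\Theta_{H_1}(t)\to 1$ as $t\downarrow0$. Then the asymptotic $\Theta_{H_1}(t)\sim C\,t^{-\alpha}$ transfers to $H_2$ with the same $C$ and $\alpha$, hence $d_s(H_1)=2\alpha=d_s(H_2)$.

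First I fix the standing interpretation. The hypothesis $\Theta_{H_1}(t)\sim C\,t^{-\alpha}$ presupposes that $\Tr(e^{-tH_1})<\infty$ for small $t>0$. So $e^{-tH_1}$ is trace class, $H_1$ has purely discrete spectrum $\lambda_1(H_1)\le\lambda_2(H_1)\le\cdots\to\infty$ counted with multiplicity, and $\Theta_{H_1}(t)=\sum_n e^{-t\lambda_n(H_1)}$. In the compact case I read ``$H_2-H_1$ compact'' as follows: $\mathcal D(H_1)=\mathcal D(H_2)$, and $H_2-H_1$ extends from this common domain to a compact operator $K$ on the whole space. Since $H_1$ and $H_2$ are self-adjoint, $K$ is symmetric and bounded, hence self-adjoint, and $H_2=H_1+K$.

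The unitary case is immediate. $e^{-tH_2}=Ue^{-tH_1}U^{-1}$ by the functional calculus, and the trace is invariant under unitary conjugation, so $\Theta_{H_2}\equiv\Theta_{H_1}$.

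For the compact case, only boundedness of $K$ is used. Set $\kappa=\|K\|$. Then $H_1-\kappa\le H_2\le H_1+\kappa$ in the sense of quadratic forms on the common form domain. The Courant--Fischer min-max principle then gives
\[
\lambda_n(H_1)-\kappa\le \lambda_n(H_2)\le\lambda_n(H_1)+\kappa \qquad (n\ge1),
\]
where the eigenvalue sequences are ordered increasingly with multiplicity. In particular $H_2$ also has compact resolvent, since its eigenvalues still tend to $\infty$. Summing the exponentials termwise yields
\[
e^{-t\kappa}\,\Theta_{H_1}(t)\le\Theta_{H_2}(t)\le e^{t\kappa}\,\Theta_{H_1}(t),
\]
so $\Theta_{H_2}(t)<\infty$ and $\Theta_{H_2}(t)/\Theta_{H_1}(t)\in[e^{-t\kappa},e^{t\kappa}]\to1$ as $t\downarrow0$. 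This proves the claim. The same sandwich also preserves any slowly varying correction such as $t^{-1}L(1/t)$, which is the form in which the proposition is used later.

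The main obstacle is not the estimate but the operator-theoretic bookkeeping in the compact case. One must make precise the sense in which an unbounded $H_2-H_1$ is ``compact'' and check that the min-max principle applies on a common form domain. I would state the domain convention above explicitly as part of the hypothesis. I would also remark that relatively compact perturbations would need a separate argument, since then the eigenvalue shifts need not be uniformly bounded.
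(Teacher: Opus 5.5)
Your proof is correct. It uses the same basic tool as the paper, namely Weyl-type eigenvalue comparison, but the form of the comparison differs in a way that matters.

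The paper's one-line argument asserts an \emph{additive} bound $|\Theta_{H_1}(t)-\Theta_{H_2}(t)|=O(1)$. You use the norm version $|\lambda_n(H_2)-\lambda_n(H_1)|\le\|K\|$ from min-max and obtain the \emph{multiplicative} sandwich $e^{-t\kappa}\Theta_{H_1}(t)\le\Theta_{H_2}(t)\le e^{t\kappa}\Theta_{H_1}(t)$, which needs only boundedness of $K$.

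The additive bound is what interlacing gives for finite-rank $K$: the traces then differ by at most a constant multiple of $\operatorname{rank}K$. In that case it has the advantage of being independent of $\|K\|$. It is false for general compact $K$, however. For instance, take $H_1=\operatorname{diag}(\sqrt n)$, so that $\Theta_{H_1}(t)\sim 2t^{-2}$, and $K=\operatorname{diag}(1/\log(n+2))$. Then $\Theta_{H_1}(t)-\Theta_{H_2}(t)\gtrsim t^{-1}/\log(1/t)$. Diagonal entries decaying like $1/\log\log n$ likewise produce an unbounded difference at the zeta-type rate $t^{-1}\log(1/t)$.

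For general compact $K$ one only gets $\Theta_{H_2}-\Theta_{H_1}=o(\Theta_{H_1})$, by splitting $K$ into a finite-rank part plus a part of small norm. That is precisely your ratio statement. So your route proves the proposition in the stated generality and beyond, covering bounded non-compact perturbations too. It also preserves the constant $C$ and any slowly varying factor, which is what the paper actually needs later when it claims the logarithmic correction is stable under compact perturbation.

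Your explicit trace-class reading is the right one. Under it, though, the proposition in either form says nothing about $H_A$ on $L^2(\R,\dd u)$, whose heat trace the paper itself notes is infinite.
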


\begin{proof}
Unitary equivalence preserves the spectrum and therefore the heat trace.
If $H_2-H_1$ is compact, Weyl’s inequalities imply
\[
|\Theta_{H_1}(t)-\Theta_{H_2}(t)|=O(1)
\quad (t\downarrow0),
\]
which leaves the leading exponent unchanged.
\end{proof}

\paragraph{Relevance to the revised obstruction}

The heat-trace formula \eqref{eq:Seeley-global} still shows
that compact geometric elliptic operators lie in the rigid
Seeley--DeWitt class: their short-time behavior is a pure power expansion
in fractional powers of $t$, with no slowly varying corrections. What
must be discarded is only the stronger claim that spectral dimension
alone excludes the value $d_s=\tfrac12$.

The revised obstruction therefore rests not on the
inequality $d_s\ge1$, but on the distinction between the compact elliptic
spectral type and the continuum arithmetic operator, together with the
separate incompatibility between Seeley--DeWitt asymptotics and the
zeta-type law $t^{-1}\log(1/t)$.

\subsection{Geometric Operators and Spectral Dimension}
\label{subsec:geometric-nogo}

For elliptic differential operators on compact manifolds, the
short–time heat trace is governed by the classical Seeley--DeWitt
expansion. This provides a direct relation between the order of the
operator, the dimension of the manifold, and the resulting spectral
dimension.

\paragraph{Classical geometric heat law}
From \eqref{eq:Seeley-global}, every elliptic operator
$H_{\mathrm{ell}}$ of order $m>0$ on a compact $d$–dimensional manifold
satisfies
\[
\Tr(e^{-tH_{\mathrm{ell}}})
\sim t^{-d/m}(A_0+A_1t^{1/m}+A_2t^{2/m}+\cdots),
\]
with $A_0>0$. Thus
\[
d_s(H_{\mathrm{ell}})=\frac{2d}{m}.
\]

\paragraph{Comparison with the arithmetic operator}
The arithmetic operator $H_A$ constructed in
Section~\ref{subsec:biharmonic} acts on $L^2(\mathbb{R},\dd u)$ and
exhibits
\[
\Theta_{H_A}(t)\sim C\,t^{-1/4},
\qquad
d_s(H_A)=\tfrac12 .
\]

Although elliptic operators on compact manifolds can in principle
realize the same numerical spectral dimension, they belong to a
different spectral class: compact elliptic operators have discrete
spectrum, whereas $H_A$ acts on a continuum Hilbert space.

\paragraph{Spectral-type obstruction}

Compact elliptic operators therefore cannot be unitarily equivalent to
the arithmetic operator $H_A$, which has continuous spectral support.

\paragraph{Implications}

Compact elliptic operators also cannot reproduce the zeta-type heat law
$t^{-1}\log(1/t)$ associated with superlinear counting, completing the
geometric obstruction used in the Hilbert--P\'olya no-go results.

\section{The Riemann Zeros and Their Spectral Asymptotics}
\label{sec:zeta-spectral}

In this section we analyze the small-$t$ behavior of the heat trace associated
with any operator whose spectrum consists of the imaginary parts of the
nontrivial zeros of the Riemann zeta function.  The starting point is the
Riemann--von~Mangoldt formula, whose Tauberian inversion yields a heat-trace
asymptotic with spectral dimension $d_s=2$ and an unavoidable logarithmic
correction.  This lies outside the Seeley--DeWitt class and is incompatible
with the arithmetic biharmonic operator $H_A$.

\subsection{Riemann--von Mangoldt Input}
\label{subsec:RvM-input}
Let

\[
    \rho_n = \beta_n + \ii \gamma_n,
    \qquad 0<\beta_n<1,\quad \gamma_n\in\R
\]
denote the nontrivial zeros of the Riemann zeta function $\zeta(s)$, ordered by
increasing $|\gamma_n|$ and counted with multiplicity.  Define
\[
    N_\zeta(T)
    := \#\{ n \in \mathbb{N} : |\gamma_n| \le T \}.
\]
The Riemann--von~Mangoldt formula gives~\cite{Edwards1974,Titchmarsh1986,IwaniecKowalski2004}
\begin{equation}
\label{eq:RvM-again}
    N_\zeta(T)
    = \frac{T}{2\pi} \log\!\frac{T}{2\pi}
      - \frac{T}{2\pi}
      + O(\log T)
    \qquad (T\to\infty).
\end{equation}
This is the sole spectral input needed below.

For spectral purposes we pass to the symmetrized spectrum
\[
    \{\pm \gamma_n : n\ge1\},
\]
which preserves the counting asymptotic up to an inessential factor of $2$ and
is the natural input for the heat trace
\[
    \Theta_\zeta(t):=\sum_{n=1}^\infty e^{-t|\gamma_n|}.
\]

\subsection{Tauberian Inversion}
\label{subsec:zeta-heat}

We now derive the small-$t$ asymptotic of $\Theta_\zeta(t)$ from
\eqref{eq:RvM-again}.  Integration by parts gives
\begin{equation}
\label{eq:Laplace-N}
    \Theta_\zeta(t)
    = t \int_0^\infty e^{-tT}\,N_\zeta(T)\,\dd T,
    \qquad t>0,
\end{equation}
and the Hardy--Littlewood--Karamata Tauberian theorem
\cite{HardyLittlewood1914,Karamata1930,Korevaar2004} therefore yields
\begin{equation}
\label{eq:zeta-heat-asymp}
    \Theta_\zeta(t)
    = \frac{\log(1/t)}{2\pi t}
      + O\!\left(\frac{1}{t}\right)
    \qquad (t\downarrow0).
\end{equation}
The logarithmic factor is the direct reflection of the slowly varying
$\log T$ term in \eqref{eq:RvM-again}.

\begin{proposition}[Heat-trace asymptotic for the zeta spectrum]
\label{prop:zeta-heat}
Let $\widetilde H$ be any self-adjoint operator whose eigenvalues coincide with
$\{\pm\gamma_n\}$, the imaginary parts of the nontrivial zeros of $\zeta(s)$.
Then as $t\downarrow0$,
\[
    \Theta_{\widetilde H}(t)
    = \frac{\log(1/t)}{2\pi t}
      + O\!\left(\frac{1}{t}\right).
\]
In particular, $\widetilde H$ has spectral dimension $d_s=2$.
\end{proposition}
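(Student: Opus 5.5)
The plan is a direct Abelian computation: substitute the Riemann--von~Mangoldt formula \eqref{eq:RvM-again} into the Laplace--Stieltjes representation \eqref{Laplace-N} and integrate term by term, rather than invoking a Tauberian theorem. Only the \emph{forward} direction, from counting function to heat trace, is needed, so no Tauberian side conditions arise.

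I will follow the paper's normalization $\Theta_{\widetilde H}(t)=\sum_n e^{-t|\gamma_n|}$, with counting function $N_\zeta(T)=\#\{n:|\gamma_n|\le T\}$. If each symmetrized eigenvalue $\pm\gamma_n$ is instead counted separately, the result changes only by a harmless overall factor~$2$, which I will note explicitly.

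\textbf{Step 1.} Justify
\[
\Theta_{\widetilde H}(t)=\int_{[0,\infty)}e^{-tT}\,\dd N_\zeta(T)=t\int_0^\infty e^{-tT}N_\zeta(T)\,\dd T .
\]
Since $N_\zeta(T)=O(T\log T)$, the boundary term $e^{-tT}N_\zeta(T)$ vanishes at infinity for every $t>0$, so the integration by parts is valid.

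\textbf{Step 2.} Fix $T_0\ge 2\pi e$ and write, for all $T\ge0$,
\[
N_\zeta(T)=\frac{T}{2\pi}\log\frac{T}{2\pi}-\frac{T}{2\pi}+R(T),
\qquad |R(T)|\le C(1+\log(1+T)).
\]
This holds for $T\ge T_0$ by \eqref{eq:RvM-again}. On $[0,T_0]$ both $N_\zeta$ and the main term are bounded (note $T\log T\to0$ at $0$), so the same bound on $R$ holds there after enlarging $C$.

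\textbf{Step 3.} Compute the main terms exactly using the standard Laplace transforms
\[
\int_0^\infty T e^{-tT}\,\dd T=\frac{1}{t^2},
\qquad
\int_0^\infty T\log T\,e^{-tT}\,\dd T=\frac{\psi(2)-\log t}{t^2},
\]
where $\psi$ is the digamma function and $\psi(2)=1-\gamma_E$. Multiplying by $t/(2\pi)$ gives the main-term contribution
\[
\frac{\log(1/t)+\psi(2)-\log(2\pi)-1}{2\pi t}
=\frac{\log(1/t)}{2\pi t}+O\!\left(\frac1t\right).
\]

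\textbf{Step 4.} Bound the remainder:
\[
\Bigl|t\int_0^\infty e^{-tT}R(T)\,\dd T\Bigr|
\le C\,t\int_0^\infty e^{-tT}\bigl(1+\log(1+T)\bigr)\,\dd T
=O\bigl(1+\log(1/t)\bigr),
\]
which is $O(1/t)$ as $t\downarrow0$. This last estimate follows by substituting $s=tT$ and using $\log(1+s/t)\le\log(1+s)+\log(1+1/t)$.

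Combining Steps 3 and 4 gives the claimed asymptotic. Consequently $\Theta_{\widetilde H}(t)=t^{-1}L(1/t)$ with $L(x)\sim\frac{1}{2\pi}\log x$ slowly varying. Under the convention that slowly varying factors do not alter the exponent, this yields $d_s=2\cdot 1=2$.

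The only delicate point is bookkeeping rather than analysis. One must handle the small-$T$ region where \eqref{eq:RvM-again} is not literally valid, and fix the multiplicity convention for the symmetrized spectrum. Both affect only the $O(1/t)$ term or an overall constant, never the leading $\log(1/t)/t$ behaviour.
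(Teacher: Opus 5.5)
Your argument is correct, but it replaces the paper's key step with a different one. The paper substitutes \eqref{eq:RvM-again} into \eqref{eq:Laplace-N} and then simply invokes the Hardy--Littlewood--Karamata theorem. You instead evaluate the Laplace transform of the main term exactly, using $\int_0^\infty T\log T\,e^{-tT}\,dT=(\psi(2)-\log t)/t^2$, and bound the $O(\log T)$ remainder by hand.

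The citation buys generality. Karamata's theorem converts any regularly varying count $N(\lambda)\sim\lambda L(\lambda)$ into $\Theta(t)\sim t^{-1}L(1/t)$, which is how the abstract phrases the result, and it never uses the specific shape of the Riemann--von~Mangoldt remainder. On its own, however, it yields only the first-order equivalence $\Theta(t)\sim\log(1/t)/(2\pi t)$, not the $O(1/t)$ error term the proposition actually asserts. That error term needs an explicit treatment of the main term and of the remainder, which is exactly what you supply. Your computation in fact gives the sharper $\Theta(t)=\frac{\log(1/t)-\gamma_E-\log 2\pi}{2\pi t}+O(\log(1/t))$.

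You are also right that only the Abelian direction is involved, so no Tauberian side conditions arise; the paper's label ``Tauberian inversion'' is a misnomer.

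One caveat: the factor $2$ you call harmless does change the displayed leading coefficient, to $1/\pi$. The proposition therefore holds as written only under the normalization you adopted: one term $e^{-t|\gamma_n|}$ per zero, with $N_\zeta$ obeying \eqref{eq:RvM-again}.
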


\begin{proof}
Substitute \eqref{eq:RvM-again} into \eqref{eq:Laplace-N} and apply the
Hardy--Littlewood--Karamata theorem.  The leading coefficient is determined by
the principal term of the Riemann--von~Mangoldt asymptotic, and the logarithmic
factor persists under Laplace transform.
\end{proof}

It is convenient to rewrite \eqref{eq:zeta-heat-asymp} as
\[
    \Theta_{\widetilde H}(t)
    \sim t^{-1}L(t^{-1}),
    \qquad
    L(r):=\frac{1}{2\pi}\log r,
\]
where $L$ is slowly varying and unbounded.  Thus
\begin{equation}
\label{eq:ds-zeta-two}
    d_s(\widetilde H)=2.
\end{equation}

\subsection{Slowly Varying Corrections}
\label{subsec:slowly-varying}

We now compare \eqref{eq:zeta-heat-asymp} with the compact elliptic
Seeley--DeWitt law.  Recall that a measurable function
$L:(0,\infty)\to(0,\infty)$ is \emph{slowly varying at infinity} if
\[
    \lim_{r\to\infty}\frac{L(\lambda r)}{L(r)}=1
    \qquad (\lambda>0),
\]
cf.~\cite{BinghamGoldieTeugels1989}.  The prototype is $L(r)=\log r$.

For an elliptic operator $H_{\mathrm{ell}}$ of order $m$ on a compact
$d$-dimensional manifold, the Seeley--DeWitt expansion
\eqref{eq:Seeley-global} has the form
\[
    \Tr(e^{-tH_{\mathrm{ell}}})
    \sim t^{-d/m}\sum_{k=0}^\infty A_k t^{k/m},
\]
so its short-time behavior is a pure power expansion in fractional powers of
$t$, with no slowly varying correction.  In particular, no asymptotic of the
form
\[
    t^{-\alpha}L(t^{-1})
\]
with nonconstant slowly varying $L$ can arise in the compact elliptic class.

\begin{proposition}[Exclusion of the Seeley--DeWitt class]
\label{prop:slow-variation-obstruction}
No elliptic operator $H_{\mathrm{ell}}$ on any compact manifold, nor any
operator unitarily equivalent to or compactly perturbable from such, can
satisfy the heat-trace asymptotic
\[
    \Theta(t)\sim \frac{\log(1/t)}{2\pi t}.
\]
In particular, no such operator can realize the nontrivial zeros of $\zeta(s)$
as its eigenvalues.
\end{proposition}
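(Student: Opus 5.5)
The argument will be by contradiction. It compares the admissible short-time expansions of the Seeley--DeWitt class with the asymptotic $\Theta(t)\sim \log(1/t)/(2\pi t)$.

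\textbf{Step 1: exact Seeley--DeWitt form.} Suppose first that $H=H_{\mathrm{ell}}$ is elliptic of order $m$ on a compact $d$-manifold. By \eqref{eq:Seeley-global},
\[
\Theta_H(t)=A_0t^{-d/m}+O(t^{-(d-1)/m}),\qquad A_0>0.
\]
If also $\Theta_H(t)\sim \log(1/t)/(2\pi t)$, then dividing gives
\[
t^{1-d/m}\bigl(A_0+o(1)\bigr)\cdot \frac{2\pi}{\log(1/t)}\to 1 .
\]
We split into three cases on the exponent.
\begin{itemize}
\item If $d/m>1$, the left side tends to $\infty$.
\item If $d/m<1$, it tends to $0$.
\item If $d/m=1$, it equals $2\pi A_0/\log(1/t)\to 0$.
\end{itemize}
Each case contradicts the limit being $1$. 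Equivalently, $t\,\Theta_H(t)$ is bounded as $t\downarrow0$ when $d/m=1$, whereas $L(r)=\frac1{2\pi}\log r$ is unbounded. This disposes of the elliptic case itself.

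\textbf{Step 2: unitary equivalence.} Unitary conjugation preserves the spectrum with multiplicities, and hence the heat trace exactly. This case therefore reduces to Step 1.

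\textbf{Step 3: compact perturbations.} If $H_2-H_1$ is compact with $H_1$ elliptic, I would invoke the estimate in the proof of Proposition~\ref{prop:unitary-invariance}, namely
\[
|\Theta_{H_1}(t)-\Theta_{H_2}(t)|=O(1).
\]
Then $\Theta_{H_2}(t)=A_0t^{-d/m}+O(t^{-(d-1)/m})+O(1)$. The three-case comparison of Step 1 goes through verbatim, because an $O(1)$ error is negligible against both $t^{-d/m}$ and $t^{-1}\log(1/t)$.

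\textbf{Step 4: the zeta consequence.} If such an operator had eigenvalues $\{\pm\gamma_n\}$, Proposition~\ref{prop:zeta-heat} would force exactly the forbidden asymptotic, which Steps 1--3 exclude.

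\textbf{Main obstacle.} The delicate point is Step 3. A general compact perturbation need not give an $O(1)$ heat-trace difference: Weyl's inequalities only control eigenvalue shifts by $\|H_2-H_1\|$. Those shifts change each term $e^{-t\lambda}$ by a factor $e^{\pm t\|K\|}=1+O(t)$, which yields a relative error $O(t)$ rather than an absolute $O(1)$.

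I would argue directly with the relative bound:
\[
\Theta_{H_2}(t)=\Theta_{H_1}(t)\bigl(1+O(t)\bigr).
\]
This uses bounded (hence compact) perturbations together with the min--max ordering of eigenvalues. The relative form suffices, since it leaves the leading behaviour $A_0t^{-d/m}(1+o(1))$ unchanged. So I would replace the $O(1)$ claim by this multiplicative estimate, which needs only $\|H_2-H_1\|<\infty$ and that both heat traces are finite.
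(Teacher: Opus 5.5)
Your argument is correct and follows the paper's proof: the leading Seeley--DeWitt term $A_0t^{-d/m}$ with $A_0>0$ cannot match $t^{-1}\log(1/t)$, and the conclusion is then carried over to unitarily equivalent and compactly perturbed operators. Your three-case comparison is an explicit version of the paper's one line, ``the logarithmic factor contradicts the pure power law.''

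The one real difference is Step~3. The paper cites Proposition~\ref{prop:unitary-invariance}, whose bound $|\Theta_{H_1}(t)-\Theta_{H_2}(t)|=O(1)$ does not follow from Weyl's inequalities. It is in fact false in general when $d/m>1$. Take $K=\sum_n\epsilon_nP_n$ diagonal in an eigenbasis of $H_1$ with $\epsilon_n=1/\log(n+2)$. Then $\Theta_{H_1}(t)-\Theta_{H_1+K}(t)\approx t\sum_n\epsilon_ne^{-t\lambda_n}$, which is of order $t^{1-d/m}/\log(1/t)$ and unbounded (e.g.\ for the Laplacian on a $3$-torus).

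Your min--max sandwich
\[
e^{-t\|K\|}\,\Theta_{H_1}(t)\;\le\;\Theta_{H_1+K}(t)\;\le\;e^{t\|K\|}\,\Theta_{H_1}(t)
\]
is valid, because a bounded self-adjoint perturbation of an operator with compact resolvent moves each ordered eigenvalue by at most $\|K\|$. It preserves $\Theta\sim A_0t^{-d/m}$ for every value of $d/m$, which is all your case analysis needs. So your route repairs the paper's argument rather than just reproducing it. Delete the part of Step~3 that first invokes the $O(1)$ estimate and keep only the multiplicative bound.

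Two smaller fixes are needed.
\begin{itemize}
\item The parenthetical ``bounded (hence compact)'' is reversed: compact implies bounded. You only use boundedness anyway.
\item As in the paper, an operator with spectrum $\{\pm\gamma_n\}$ is not bounded below. So in Step~4 the quantity $\sum_n e^{-t|\gamma_n|}$ from Proposition~\ref{prop:zeta-heat} is, up to a harmless factor, $\mathrm{Tr}\,e^{-t|H|}$, not the trace in \eqref{eq:Seeley-global}. For bounded-below $H$ the zeta clause is then trivial, since the spectrum would be unbounded below. Otherwise you need the leading law $\mathrm{Tr}\,e^{-t|H|}\sim c\,t^{-d/m}$ (e.g.\ from Weyl's law) before running your comparison.
\end{itemize}
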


\begin{proof}
Combine \eqref{eq:zeta-heat-asymp} with the Seeley--DeWitt expansion
\eqref{eq:Seeley-global} and the invariance result of
Proposition~\ref{prop:unitary-invariance}.  The logarithmic factor contradicts the pure
power-law form of \eqref{eq:Seeley-global}.
\end{proof}

The zeta spectrum therefore lies outside the compact elliptic
Seeley--DeWitt class.  At the same time, the arithmetic operator $H_A$
constructed earlier satisfies $\Theta_{H_A}(t)\sim Ct^{-1/4}$ and hence
$d_s(H_A)=\tfrac12$, so it is spectrally distinct from any
zeta-spectrum operator.

\begin{corollary}[Spectral separation]
\label{cor:spectral-separation}
The three universality classes
\[
    \{H_{\mathrm{ell}}\},\quad \{H_A\},\quad \{\widetilde H\},
\]
corresponding respectively to compact geometric elliptic operators, the
arithmetic biharmonic operator, and zeta-spectrum operators, remain distinct:
compact elliptic operators lie in the rigid Seeley--DeWitt class, the
arithmetic operator has spectral dimension $d_s(H_A)=\tfrac12$, and any
zeta-spectrum operator satisfies \eqref{eq:zeta-heat-asymp} with
\begin{equation}
\label{eq:ds-zeta-two-reuse}
    d_s(\widetilde H)=2.
\end{equation}
In particular, no operator can belong to more than one of these classes up to
unitary equivalence or compact perturbation.
\end{corollary}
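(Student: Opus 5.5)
The corollary will follow by assembling three invariants established earlier, one for each class, and checking pairwise that no two can coincide. For each pair I will pick an invariant that is preserved under unitary equivalence and under compact perturbation, in the sense of Proposition~\ref{prop:unitary-invariance}, and show that it takes different values on the two classes.

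\emph{Elliptic versus zeta.} This pair is handled by Proposition~\ref{prop:slow-variation-obstruction}. By \eqref{eq:Seeley-global}, $\Tr(e^{-tH_{\mathrm{ell}}})$ is a pure power expansion with leading term $A_0t^{-d/m}$. By Proposition~\ref{prop:zeta-heat}, $\Theta_{\widetilde H}(t)$ equals $t^{-1}L(1/t)$ with $L$ unbounded and slowly varying. If the two agreed up to $O(1)$, which is the error Proposition~\ref{prop:unitary-invariance} permits under compact perturbation, then the ratio $t\,\Theta(t)$ would have to converge to a finite constant or behave like a pure power. It instead diverges like $\log(1/t)/2\pi$.

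\emph{Arithmetic versus zeta.} I compare leading exponents. Theorem~\ref{thm:biharmonic-limit} gives $d_s(H_A)=\tfrac12$, and \eqref{eq:ds-zeta-two} gives $d_s(\widetilde H)=2$. Proposition~\ref{prop:unitary-invariance} shows $d_s$ is unchanged under both equivalences. So the two classes cannot share a member, since $t^{-1/4}=o\bigl(t^{-1}\log(1/t)\bigr)$ and an $O(1)$ discrepancy cannot bridge that gap.

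\emph{Elliptic versus arithmetic.} Here the spectral dimension alone does not separate the classes, since $2d/m=\tfrac12$ is possible. Following the discussion in \S\ref{subsec:geometric-nogo}, I will use spectral type instead. $H_A$ is a Fourier multiplier by $c_4k^4$ on $L^2(\R)$, so it has purely absolutely continuous spectrum $[0,\infty)$. Elliptic operators on compact manifolds have compact resolvent and therefore purely discrete spectrum. Unitary equivalence preserves both properties. For compact perturbations I will invoke Weyl's theorem, which says the essential spectrum is invariant under compact perturbation. The essential spectrum of $H_A$ is $[0,\infty)$, while that of $H_{\mathrm{ell}}$ is empty. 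So no operator can be reached from both.

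\emph{Main obstacle.} The delicate points are the analytic meaning of a heat trace for $H_A$ and the claimed $O(1)$ control under compact perturbation. $e^{-tH_A}$ is not trace class, and the paper works with the on-diagonal kernel $K_t(u,u)$. I will therefore phrase the arithmetic-versus-zeta separation so that the essential-spectrum argument carries it. The zeta operator has discrete spectrum with $N(\lambda)<\infty$, so its essential spectrum is empty, while that of $H_A$ is $[0,\infty)$; Weyl's theorem then separates these two classes as well, independently of heat traces. The spectral dimension comparison then serves as the quantitative reinforcement in the sense the paper intends. With all three pairs separated by equivalence-invariant data, no operator lies in two classes.
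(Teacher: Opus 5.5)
Your proposal is correct, but it is organized differently from the paper's proof. The paper argues in one line. It uses \eqref{eq:Seeley-global}, Theorem~\ref{thm:biharmonic-limit} and Proposition~\ref{prop:zeta-heat} to attach heat-trace data to the three classes (pure power law, $t^{-1/4}$, $t^{-1}\log(1/t)$), and cites Proposition~\ref{prop:unitary-invariance} to make that data invariant.

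You instead separate the classes pairwise. For both pairs involving $H_A$ you replace heat-trace data by the essential spectrum:
\begin{itemize}
\item $\sigma_{\mathrm{ess}}(H_A)=[0,\infty)$;
\item an elliptic operator on a closed manifold has compact resolvent, so its essential spectrum is empty;
\item a zeta-spectrum operator has essential spectrum inside the discrete set $\{\pm\gamma_n\}$;
\item Weyl's theorem makes $\sigma_{\mathrm{ess}}$ exactly invariant under unitary equivalence and compact perturbation.
\end{itemize}
This is a real gain in rigor. Since $e^{-tH_A}$ is not trace class, Proposition~\ref{prop:unitary-invariance}, which is a statement about traces, does not literally apply to $H_A$. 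Moreover, heat-trace data cannot separate $H_A$ from an elliptic operator of order $m=4d$. For instance, $\partial_\theta^4$ on the circle also has a pure $t^{-1/4}$ heat trace. The paper's continuous-versus-discrete remark is stated only for unitary equivalence, so your Weyl-theorem step is exactly what closes the elliptic/arithmetic pair under compact perturbation. What the paper's route buys is a single uniform invariant across all three classes. But that invariant is neither well defined for $H_A$ nor able to distinguish $H_A$ from elliptic operators with $2d/m=\tfrac12$.

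The one place you still rely on heat traces is elliptic versus zeta, where both classes have empty essential spectrum. There you inherit the paper's claim that compact perturbations change $\Theta$ by $O(1)$, and that claim is false in general. Take a diagonal compact perturbation whose eigenvalue shifts tend to $0$ slowly. It changes $\Theta$ by roughly $t\,\epsilon\,\Theta(t)$, where $\epsilon$ is the size of the shifts at eigenvalues below $1/t$. This is unbounded whenever $t\,\Theta(t)\to\infty$ and the shifts decay slowly enough; examples are the zeta law itself, or the Laplacian on a flat torus of dimension at least $3$.

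Use instead the min--max bound $|\lambda_n(H+K)-\lambda_n(H)|\le\|K\|$. It gives $e^{-t\|K\|}\Theta_H(t)\le\Theta_{H+K}(t)\le e^{t\|K\|}\Theta_H(t)$, hence $\Theta_{H+K}(t)\sim A_0t^{-d/m}$ for elliptic $H$. Then $t\,\Theta_{H+K}(t)$ is asymptotically a pure power and can never behave like $\log(1/t)/2\pi$. With that substitution your argument is complete.
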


\begin{proof}
The elliptic case follows from \eqref{eq:Seeley-global}, the arithmetic case
from Theorem~\ref{thm:biharmonic-limit}, and the zeta case from
Proposition~\ref{prop:zeta-heat}.  Invariance under unitary equivalence
and compact perturbations is provided by Proposition~\ref{prop:unitary-invariance}.
\end{proof}

At the same time, the arithmetic coherence construction forces a strictly
fractional spectral dimension $d_s=\tfrac12$, while geometric
elliptic operators remain in the rigid Seeley--DeWitt class, with pure
power-law heat asymptotics and no slowly varying correction. Although the value
$d_s=\tfrac12$ is not excluded for sufficiently high-order elliptic operators,
the compact elliptic class is still separated from the arithmetic continuum
operator $H_A$ by spectral type, and from zeta-type operators by the absence of
the logarithmic factor.

\section{The Double Hilbert--P\'olya Obstruction}
\label{sec:double-obstruction}

In this section we combine the arithmetic and analytic constraints established
earlier. The arithmetic single-valuation construction forces a biharmonic
continuum limit with spectral dimension $d_s=\tfrac12$, whereas the zeta
spectrum forces heat asymptotics of the form
\[
    \Theta_{\widetilde H}(t)
    = \frac{\log(1/t)}{2\pi t} + O(t^{-1}),
\]
and hence spectral dimension $d_s=2$. These two requirements are incompatible.

\subsection{Arithmetic Obstruction}
\label{subsec:arith-obstruction}

The variational and continuum analysis of
Sections~\ref{sec:exp-principle}--\ref{sec:arithmetic-operator} shows that a
single-valuation exponential kernel on a one-dimensional label axis produces,
after continuum scaling, a one-dimensional second-order generator
$L_\infty=-c_2\partial_u^2$, and hence the fourth-order Hamiltonian
\[
    H_A=(L_\infty)^2=c_4\partial_u^4.
\]
By Theorem~\ref{thm:biharmonic-limit}, its heat trace satisfies
\[
    \Theta_{H_A}(t)\sim Ct^{-1/4},
\]
so that
\[
    d_s(H_A)=\frac12.
\]

\begin{theorem}[Arithmetic Obstruction]
\label{thm:arith-obstruction}
Let $H$ be any operator obtained as a strong resolvent limit of discrete
operators constructed from a single-valuation exponential kernel on a
one-dimensional label set. Then $H$ has spectral dimension $d_s=\tfrac12$.
In particular, $H$ cannot coincide, up to unitary equivalence or compact
perturbation, with any operator whose spectrum forces a different leading
heat-trace exponent.
\end{theorem}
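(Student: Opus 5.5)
The proof will chain three results already in the paper: Proposition~\ref{prop:mosco-conv}, which identifies the strong resolvent limit; the explicit computation in Section~\ref{subsec:biharmonic}, which gives the exponent; and Proposition~\ref{prop:unitary-invariance}, which makes that exponent invariant.

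\emph{Step 1: identify the limit.} Let $H$ be a strong resolvent limit of discrete operators built from a single-valuation exponential kernel on a one-dimensional label set. Under (H1)--(H3), Proposition~\ref{prop:mosco-conv} shows that the normalized forms converge in the Mosco sense to $\mathfrak{E}[f]=c_2\int|f'|^2\,\dd u$. Hence the generators converge in strong resolvent sense to $L_\infty=-c_2\partial_u^2$. Passing to the squared operators as in the subsection on convergence of the squared operators gives $H=c_4\partial_u^4$ with $c_4=c_2^2$.

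This step needs two uniqueness facts. First, strong resolvent limits are unique, so $H$ equals this operator and not merely something resembling it. Second, the constant $c_2$ depends only on the second moment of the fixed profile $k$. So any single-valuation kernel, once normalized per unit length, produces an operator of the same form and differs only through the value of $c_4>0$.

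\emph{Step 2: compute the exponent.} I use the Fourier diagonalization $\widehat{HF}(k)=c_4k^4\widehat F(k)$. The on-diagonal heat kernel is then
\[
K_t(u,u)=\frac{1}{2\pi}\int_{\R}e^{-tc_4k^4}\,\dd k=C\,t^{-1/4},
\qquad C=\frac{\Gamma(\tfrac14)}{4\pi c_4^{1/4}},
\]
by the substitution $s=(tc_4)^{1/4}k$. This is exactly the content of Theorem~\ref{thm:biharmonic-limit}, read with the local (on-diagonal) definition of $d_s$, since $\Theta_H(t)=\infty$ on $\R$. The exponent $\alpha=\tfrac14$ does not depend on $c_4$, so $d_s(H)=2\alpha=\tfrac12$ for every admissible kernel.

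\emph{Step 3: the "in particular" clause.} Suppose $H'$ is unitarily equivalent to $H$, or $H'-H$ is compact. Proposition~\ref{prop:unitary-invariance} then gives $d_s(H')=d_s(H)=\tfrac12$. If the spectrum of $H'$ forces a leading heat exponent $\alpha'\neq\tfrac14$, then $d_s(H')=2\alpha'\neq\tfrac12$, which is a contradiction. The same argument applies if the spectrum forces a nonconstant slowly varying factor, as with $t^{-1}\log(1/t)$, because $d_s(H')$ would then again differ from $\tfrac12$.

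\emph{Main obstacle.} I expect the hardest part to be making Proposition~\ref{prop:unitary-invariance} apply in this setting. It is stated for trace-class heat semigroups, but here the trace diverges and the spectral dimension is local. I would first try to localize: replace $\Theta(t)$ by $\Tr(\chi e^{-tH}\chi)$ for a compactly supported cutoff $\chi$. For a compact perturbation I would use the Duhamel formula to show that the difference of the localized traces is $o(t^{-1/4})$, which leaves the leading exponent unchanged. For unitary equivalence I would fall back on the spectral type as an invariant. The Lebesgue spectrum $[0,\infty)$ of $H$ cannot be unitarily equivalent to a pure-point spectrum such as $\{\pm\gamma_n\}$. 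By Weyl's theorem the essential spectrum is also preserved under compact perturbations, so that case is excluded as well. This second route makes the conclusion independent of how $d_s$ is normalized.
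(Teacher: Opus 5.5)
\textbf{The gap is in Step 3.} Your Steps 1--3 follow the paper's proof exactly: Proposition~\ref{prop:mosco-conv}, squaring, the Fourier computation of Section~\ref{subsec:biharmonic}, then Proposition~\ref{prop:unitary-invariance}. Step 3 is also the weak point of the paper's own argument, and the problem is more serious than the divergent trace you flag.

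With the on-diagonal definition of $d_s$ that you adopt, spectral dimension is not a unitary invariant at all. Let $\mathcal{F}$ be the Fourier transform and set $(Vg)(k)=|\phi'(k)|^{1/2}g(\phi(k))$ with $\phi(k)=\operatorname{sgn}(k)\sqrt{c_2}\,k^2$. Then $V$ is unitary and carries the multiplier $c_2\xi^2$ to $c_2^2k^4$. Hence $\mathcal{F}^{-1}V\mathcal{F}$ conjugates $L_\infty=-c_2\partial_u^2$ onto $H_A=c_2^2\partial_u^4$. Yet the diagonal of $e^{-tL_\infty}$ is $(4\pi c_2t)^{-1/2}$, i.e.\ $d_s=1$. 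So $d_s=\tfrac12$ is a property of $H_A$ as a kernel operator in the variable $u$, not of its unitary class. The assertion $d_s(H')=\tfrac12$ for every $H'$ unitarily equivalent to $H$ is false, and no localization can repair it.

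For compact (indeed any bounded self-adjoint) $K$, your Duhamel estimate does go through: $\Tr(\chi e^{-t(H+K)}\chi)$ and $\Tr(\chi e^{-tH}\chi)$ differ by $O(t^{3/4})$. This still gives no contradiction. A localized exponent and the global heat-trace exponent ``forced by the spectrum'' are different quantities even for one operator. For $-\partial_u^2+u^2$ the trace is $\sim(2t)^{-1}$ while the diagonal is $\sim(4\pi t)^{-1/2}$. So comparing $d_s(H')=\tfrac12$ with $2\alpha'\neq\tfrac12$ sets two unrelated numbers against each other.

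\textbf{What does work is your fallback, and it should replace Step 3.} Unitary conjugation preserves the spectrum. Weyl's theorem gives $\sigma_{\mathrm{ess}}(H_A+K)=[0,\infty)$ for compact self-adjoint $K$. An operator whose spectrum is a closed discrete set has essential spectrum inside that discrete set. This covers $\{\pm\gamma_n\}$ and any operator whose spectrum determines a finite heat trace. Such an operator is therefore neither unitarily equivalent to $H_A$ nor a compact perturbation of it. This argument never uses $d_s(H)=\tfrac12$, and it handles every discrete-spectrum target. Nothing can extend the clause to continuous-spectrum targets with a different local exponent, since $L_\infty$ is exactly such a target.

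\textbf{Two smaller points concern Step 1.}

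First, suppose the discrete operators in the hypothesis are the Laplacians $L^{(N)}$ themselves. Then uniqueness of strong resolvent limits gives $H=L_\infty$, with $d_s=1$. The conclusion holds only for the squared family, so you should state that choice explicitly rather than appeal to uniqueness.

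Second, your claim that every single-valuation kernel gives the same limit up to the value of $c_4$ goes beyond Proposition~\ref{prop:mosco-conv}. That proposition is stated only for the log-prime grid under (H1)--(H3). Moreover, (H1) fails for $u_j=\log p_j$: the points of $X^{(N)}$ in a fixed compact interval do not change with $N$, so the mesh there does not shrink.
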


\begin{proof}
By Proposition~\ref{prop:mosco-conv}, the continuum limit of the discrete
Laplacians is $L_\infty=-c_2\partial_u^2$, and the associated Hamiltonian is
$H_A=(L_\infty)^2=c_4\partial_u^4$. Theorem~\ref{thm:biharmonic-limit} then
implies $\Theta_{H_A}(t)\sim Ct^{-1/4}$ and hence $d_s(H_A)=\tfrac12$.
Stability under unitary equivalence and compact perturbation follows from
Proposition~\ref{prop:unitary-invariance}.
\end{proof}

\subsection{Analytic Obstruction}
\label{subsec:analytic-obstruction}

For any self-adjoint operator $\widetilde H$ with spectrum
$\{\pm\gamma_n\}$, where $\tfrac12+i\gamma_n$ runs over the nontrivial zeros of
$\zeta(s)$, Proposition~\ref{prop:zeta-heat} gives
\begin{equation}
\label{eq:zeta-heat-recall-again}
    \Theta_{\widetilde H}(t)
    = \frac{\log(1/t)}{2\pi t}
      + O\!\left(\frac{1}{t}\right)
    \qquad (t \downarrow 0).
\end{equation}
Thus
\[
    d_s(\widetilde H)=2,
\]
and the heat trace carries an unavoidable slowly varying logarithmic factor.
By Proposition~\ref{prop:slow-variation-obstruction}, such behavior cannot
occur in the compact elliptic Seeley--DeWitt class.

\begin{theorem}[Analytic Obstruction]
\label{thm:analytic-obstruction}
Let $\widetilde H$ be a self-adjoint operator whose spectrum coincides, up to
sign, with the set of ordinates $\{\gamma_n\}$ of the nontrivial zeros of
$\zeta(s)$. Then:
\begin{enumerate}
    \item[\textup{(i)}]
    $\widetilde H$ has spectral dimension $d_s(\widetilde H)=2$;
    \item[\textup{(ii)}]
    its heat trace contains the logarithmic factor $\log(1/t)$ as in
    \eqref{eq:zeta-heat-recall-again};
    \item[\textup{(iii)}]
    $\widetilde H$ cannot be unitarily equivalent to, nor compactly perturbable
    from, any elliptic operator of finite order on a compact manifold.
\end{enumerate}
\end{theorem}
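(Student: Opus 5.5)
The plan is to derive (i) and (ii) by a direct Abelian computation from the Riemann--von~Mangoldt formula \eqref{eq:RvM-again}, and then obtain (iii) by comparing the result with the Seeley--DeWitt expansion \eqref{eq:Seeley-global}, using Proposition~\ref{prop:unitary-invariance}. Throughout, the heat trace of $\widetilde H$ is read as $\Theta_{\widetilde H}(t)=\sum_n e^{-t|\gamma_n|}$, i.e.\ the heat trace of $|\widetilde H|$, as in Section~\ref{sec:zeta-spectral}. This convention is needed because $\widetilde H$, with eigenvalues $\pm\gamma_n$, is not bounded below.

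For (i) and (ii) I will not need the full Tauberian machinery: passing from the counting function to the Laplace transform is the Abelian, unconditional direction. Starting from \eqref{eq:Laplace-N}, I insert \eqref{eq:RvM-again} term by term. The principal term uses the elementary identity
\[
\int_0^\infty e^{-tT}\,T\log T\,\dd T=t^{-2}\bigl(\log(1/t)+1-\gamma_E\bigr),
\]
where $\gamma_E$ is Euler's constant. Multiplied by $t/(2\pi)$, it contributes $\frac{\log(1/t)}{2\pi t}+O(t^{-1})$. The terms $-\frac{T}{2\pi}\log(2\pi)-\frac{T}{2\pi}$ contribute $O(t^{-1})$. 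The remainder $O(\log T)$ contributes $t\int_0^\infty e^{-tT}O(\log(2+T))\,\dd T=O(\log(1/t))$. The bounded initial range of $T$, where \eqref{eq:RvM-again} is not yet effective, contributes $O(1)$. Summing these gives \eqref{eq:zeta-heat-recall-again}, which is (ii). For (i), I note that the definition $d_s=2\alpha$ from Section~\ref{subsec:invariance} must be read with a slowly varying factor allowed. I therefore set $\alpha=-\lim_{t\downarrow0}\log\Theta(t)/\log t$. Since $\Theta_{\widetilde H}(t)=t^{-1}L(1/t)$ with $L(r)=\frac{1}{2\pi}\log r$ slowly varying, this gives $\alpha=1$ and hence $d_s=2$, in agreement with \eqref{eq:ds-zeta-two}.

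For (iii), I argue by contradiction. Suppose $\widetilde H$ is unitarily equivalent to, or a compact perturbation of, an elliptic operator $H_{\mathrm{ell}}$ of order $m$ on a compact $d$-manifold. By Proposition~\ref{prop:unitary-invariance}, in the quantitative form $|\Theta_{\widetilde H}(t)-\Theta_{H_{\mathrm{ell}}}(t)|=O(1)$ used in its proof, I get
\[
\frac{\log(1/t)}{2\pi t}+O(t^{-1})=A_0t^{-d/m}+O\bigl(t^{-(d-1)/m}\bigr)+O(1),
\qquad A_0>0 .
\]
I then split into two cases. If $d/m\neq1$, comparing $\log\Theta/\log(1/t)$ on both sides gives $1=d/m$, which is a contradiction. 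If $d/m=1$, I multiply by $t$. The right side tends to $A_0<\infty$, while the left side is $\frac{1}{2\pi}\log(1/t)+O(1)\to\infty$. This contradicts finiteness of $A_0$ and is exactly Proposition~\ref{prop:slow-variation-obstruction}.

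The hard step is the compact-perturbation clause. Weyl-type inequalities give control of individual eigenvalues, not directly an $O(1)$ bound on heat traces, and the $\pm$ spectrum complicates the comparison because one must pass to $|\widetilde H|$. I will invoke Proposition~\ref{prop:unitary-invariance} as stated, since it is available from earlier in the paper, and I will apply it to $|\widetilde H|$ against $H_{\mathrm{ell}}$ shifted to be nonnegative. If a more robust argument is wanted, the fallback is to use only the eigenvalue-counting consequence of Weyl's inequalities. Then $N_{\widetilde H}(\lambda)\sim\frac{\lambda}{2\pi}\log\lambda$ cannot match the Weyl law $N_{\mathrm{ell}}(\lambda)\sim c\,\lambda^{d/m}$ under bounded eigenvalue shifts, and that suffices for (iii).
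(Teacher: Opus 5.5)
Your proposal is correct and follows the paper's route: (i)--(ii) come from inserting the Riemann--von~Mangoldt formula into the Laplace representation \eqref{eq:Laplace-N}, and (iii) from contradiction with the Seeley--DeWitt expansion through Proposition~\ref{prop:unitary-invariance}; your sharpenings are welcome, namely your explicit Abelian computation (the paper cites Hardy--Littlewood--Karamata, which by itself gives only $\sim$ and not the $O(1/t)$ error), your reading of $d_s$ via $\log\Theta/\log(1/t)$ (needed because $\Theta\sim Ct^{-\alpha}$ fails here), and your separate handling of $d/m=1$. Your doubt about the $O(1)$ bound in that proposition is justified but harmless: min-max gives $|\lambda_n(H_1)-\lambda_n(H_2)|\le\|K\|$, hence $e^{-t\|K\|}\Theta_{H_1}(t)\le\Theta_{H_2}(t)\le e^{t\|K\|}\Theta_{H_1}(t)$, and this ratio control already suffices for your case analysis (and under the two-sided reading, a bounded perturbation of a bounded-below $H_{\mathrm{ell}}$, as assumed in Section~\ref{sec:geometric-obstruction}, is itself bounded below and so cannot have eigenvalues $-|\gamma_n|\to-\infty$ at all).
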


\begin{proof}
Parts (i) and (ii) follow from Proposition~\ref{prop:zeta-heat}. Part
(iii) follows from Proposition~\ref{prop:slow-variation-obstruction} together
with Proposition~\ref{prop:unitary-invariance}.
\end{proof}

\subsection{Proof of the Main No-go Theorem}
\label{subsec:proof-double-obstruction}

We now combine Theorems~\ref{thm:arith-obstruction} and
\ref{thm:analytic-obstruction}.

\begin{theorem}[Double Hilbert--P\'olya Obstruction]
\label{thm:double-obstruction-again}
There is no self-adjoint operator $H$ on any Hilbert space such that:
\begin{enumerate}
    \item[\textup{(i)}]
    $H$ is unitarily equivalent to, or differs by a compact operator from, a
    limiting operator constructed from a single-valuation exponential kernel on
    a one-dimensional label axis; and
    \item[\textup{(ii)}]
    the spectrum of $H$ coincides, up to sign, with the set of ordinates of the
    nontrivial zeros of $\zeta(s)$.
\end{enumerate}
In particular, no Hilbert--P\'olya operator can belong to the arithmetic
single-valuation class, and none can lie in the compact geometric elliptic
class.
\end{theorem}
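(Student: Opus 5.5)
The argument is by contradiction, combining Theorem~\ref{thm:arith-obstruction} with Theorem~\ref{thm:analytic-obstruction} through the invariance principle of Proposition~\ref{prop:unitary-invariance}. Suppose $H$ satisfies (i) and (ii). The work then splits into three steps.

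\textbf{Step 1 (the arithmetic side).} By (i), $H=UH_AU^{-1}$ or $H-H_A$ is compact, where $H_A$ is a limiting operator of the single-valuation exponential-kernel class. Theorem~\ref{thm:arith-obstruction} gives $d_s(H_A)=\tfrac12$, that is, the heat-type quantity behaves like $Ct^{-1/4}$. Proposition~\ref{prop:unitary-invariance} then transports this to $d_s(H)=\tfrac12$.

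\textbf{Step 2 (the analytic side).} By (ii), Proposition~\ref{prop:zeta-heat} applies directly to $H$ and gives
\[
\Theta_H(t)=\frac{\log(1/t)}{2\pi t}+O(t^{-1}).
\]
Hence $d_s(H)=2$, with a nonconstant slowly varying factor.

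\textbf{Step 3 (the comparison).} Suppose $\Theta_H(t)\sim C t^{-1/4}$ and $\Theta_H(t)\sim t^{-1}L(1/t)$ both held. Then $t^{3/4}L(1/t)\to C^{-1}\cdot(\text{finite})$. But $L(r)=\tfrac{1}{2\pi}\log r\to\infty$, so
\[
\frac{t^{-1}L(1/t)}{t^{-1/4}}=t^{-3/4}L(1/t)\to\infty,
\]
which is a contradiction. I will also note that this comparison needs only the exponents $\tfrac12\neq 2$; the logarithmic factor gives a second, independent mismatch.

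The closing clauses of the theorem follow separately. No Hilbert--P\'olya operator lies in the arithmetic class, by the contradiction above. No such operator lies in the compact elliptic class, by Theorem~\ref{thm:analytic-obstruction}(iii).

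\textbf{Main obstacle.} The hard step is Step 1, because the two notions of $d_s$ being compared must be the same. $H_A$ acts on $L^2(\R,\dd u)$ with purely continuous spectrum, so $\Tr(e^{-tH_A})=\infty$. Its $d_s=\tfrac12$ was extracted from the on-diagonal kernel $K_t(u,u)$, not from a trace. By contrast, a zeta-spectrum operator has discrete spectrum and a finite trace (after taking $|H|$, since the spectrum $\pm\gamma_n$ is not bounded below).

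I will first try to avoid heat traces altogether and argue by spectral type, which is more robust. The spectrum of $H_A$ is $[0,\infty)$ and purely absolutely continuous, and this is preserved under unitary equivalence. By Weyl's theorem on essential spectra it is also preserved under compact perturbation, so $\sigma_{\mathrm{ess}}(H)=[0,\infty)$. However, the set $\{\pm\gamma_n\}$ is discrete with no finite accumulation point, so an operator with that spectrum has empty essential spectrum. This contradiction proves the theorem rigorously under (i)--(ii).

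I would then present the spectral-dimension mismatch of Step 3 as the quantitative refinement, stated in terms of local or regularized heat-kernel exponents. There I would explicitly flag that Proposition~\ref{prop:unitary-invariance} is being applied in that regularized sense.
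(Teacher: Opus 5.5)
Your Steps 1--3 reproduce the paper's proof. The paper obtains $d_s(H)=\tfrac12$ from (i) via Theorem~\ref{thm:arith-obstruction} and Proposition~\ref{prop:unitary-invariance}. It obtains $d_s(H)=2$ with a $\log(1/t)$ factor from (ii) via Theorem~\ref{thm:analytic-obstruction}, and declares the two incompatible.

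The argument you actually rest the theorem on is genuinely different. You use that $\sigma_{\mathrm{ess}}(H_A)=[0,\infty)$ is invariant under unitary equivalence and, by Weyl's theorem, under compact perturbation, whereas $\sigma(H)=\{\pm\gamma_n\}$ is closed and countable. This is correct, and it is the sounder route because it needs no heat trace at all. Two small repairs are needed:
\begin{itemize}
\item ``Empty essential spectrum'' presupposes finite multiplicities. You don't need it: $[0,\infty)=\sigma_{\mathrm{ess}}(H)\subseteq\sigma(H)$ is already impossible for a countable set.
\item Only the essential spectrum survives compact perturbation, not absolute continuity. Weyl--von Neumann gives arbitrarily small Hilbert--Schmidt $K$ with $H_A+K$ pure point.
\end{itemize}

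If the paper's route were valid, it would give an obstruction phrased purely through heat asymptotics. But the obstacle you flag is fatal to it, and your planned ``regularized'' refinement cannot be rescued by appealing to Proposition~\ref{prop:unitary-invariance}. Besides $\Tr(e^{-tH_A})=\infty$, the on-diagonal exponent is not a unitary invariant. Indeed, $c_4\partial_u^4$ and $-\partial_u^2$ on $L^2(\R,\dd u)$ are unitarily equivalent: after Fourier transform and the substitutions $\lambda=c_4k^4$, respectively $\lambda=k^2$, on each half-line, both become multiplication by $\lambda$ on $L^2((0,\infty);\mathbb{C}^2)$. Yet their diagonal heat kernels behave like $t^{-1/4}$ and $t^{-1/2}$. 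So the transport of $d_s=\tfrac12$ to $H$ in Step 1 is unjustified in either the trace sense or the local sense. Step 3 should be offered at most as a heuristic, not as part of the proof.
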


\begin{proof}
Condition \textup{(i)} implies $d_s(H)=\tfrac12$ by
Theorem~\ref{thm:arith-obstruction}. Condition \textup{(ii)} implies
\[
    \Theta_H(t)
    = \frac{\log(1/t)}{2\pi t}
      + O\!\left(\frac{1}{t}\right)
\]
and hence $d_s(H)=2$ by Theorem~\ref{thm:analytic-obstruction}. These two
requirements are incompatible. Moreover, the zeta-spectrum heat trace contains
a logarithmic correction, whereas the arithmetic single-valuation class yields
pure power-law behavior $t^{-1/4}$. By Proposition~\ref{prop:unitary-invariance}, these
features are stable under unitary equivalence and compact perturbation. Thus no
such operator can exist.
\end{proof}

\paragraph{Implications for Hilbert--P\'olya}
Any successful Hilbert--P\'olya operator must therefore lie outside both the
compact geometric elliptic class and the arithmetic single-valuation class. At
minimum, it must involve additional valuation directions, genuinely nonlocal
analytic structure, or a fundamentally different spectral mechanism.

\section{Further Remarks}
\label{sec:conclusion}

The obstructions established in this work rule out two broad and natural families of Hilbert--Pólya candidates: (i) single–valuation arithmetic operators arising from entropy-maximizing exponential kernels on the logarithmic prime axis, whose continuum limits necessarily have spectral dimension $d_s=\tfrac12$, and (ii) geometric elliptic operators on finite-dimensional compact manifolds, whose Seeley--DeWitt heat-trace asymptotics are incompatible with the $t^{-1}\log(1/t)$ behavior forced by the Riemann--von~Mangoldt law, and which are in any case not unitarily equivalent to the arithmetic continuum operator $H_A$. In view of these obstructions, any viable Hilbert--Pólya operator must lie outside both the geometric elliptic and the single–valuation arithmetic universality classes.

In separate work, we investigate operator models that exemplify such ``beyond classical'' behavior. These include nonlocal log-deformed Laplacians of the form $H=f(-\Delta)$ with $f(x)\sim x/\log x$, which exhibit the zeta-type Weyl law $N(\lambda)\sim C\lambda\log\lambda$, as well as arithmetic quantum graphs with edge lengths $\log p$ whose periodic-orbit structure naturally organizes prime-power contributions. These constructions provide concrete examples of operators with the correct large-scale spectral growth and arithmetic trace contributions, while lying strictly outside the classes excluded by the present no-go results. A systematic analysis of these models will appear elsewhere; see also the related spectral-prime framework in~\cite{Watson2025} and the general theory of quantum graphs in~\cite{KottosSmilansky1999} and~\cite{Kuchment2004}.

\section{Acknowledgment}
The authors thank Krishnaswami Alladi, David Wilson, Keneth Valpey and Bob Cohen for invaluable advice and insight.

\section{Funding}
This research was carried out independently and received no external funding. 
The authors have no competing interests to declare. 
All aspects of the work were carried out solely by the authors. 

\section{Data}
No datasets were generated or analyzed in the course of this study.

\bibliographystyle{elsarticle-num}
\bibliography{references}

\end{document}